\newtheorem{thm}{Theorem}[section]
\newtheorem{prop}[thm]{Proposition}
\newtheorem{defn}[thm]{Definition}
\numberwithin{equation}{section}
\def\ll{{L}}
\def\bc{\begin{center}}
\def\ec{\end{center}}
\def\d{\displaystyle}
\begin{document}

\title[Solvable Leibniz algebras with filiform nilradical]{Solvable Leibniz algebras with filiform nilradical}%

\author{L.M. Camacho,
B.A. Omirov and K.K. Masutova}
\address{[L.M. Camacho] Dpto. Matem\'{a}tica Aplicada I.
Universidad de Sevilla. Avda. Reina Mercedes, s/n. 41012 Sevilla.
(Spain)} \email{lcamacho@us.es}
\address{[B.A. Omirov, K.K. Masutova] Institute of Mathematics (Uzbekistan), 29, F.Hodjaev srt., 100125, Tashkent (Uzbekistan)}
\email{omirovb@mail.ru, kamilyam81@mail.ru}
%

\begin{abstract}
In this paper we continue the description of solvable Leibniz algebras whose nilradical is a filiform algebra. In fact, solvable Leibniz algebras whose nilradical is a naturally graded filiform Leibniz algebra are described in \cite{Campo} and \cite{SolGFil}. Here we extend the description to solvable Leibniz algebras whose nilradical is a filiform algebra. We establish that solvable Leibniz algebras with filiform Lie nilradical are Lie algebras.
\end{abstract}
\maketitle

\textbf{Mathematics Subject Classification 2010}: 17A32,  17A65, 17B30.

\textbf{Key Words and Phrases}: Leibniz algebra, Lie algebra, solvability, nilradical, filiform algebra, outer derivation.
\section{Introduction}

Leibniz algebras were introduced by Loday \cite{loday2} as a non-skew symmetric version of Lie algebras. These algebras generalize Lie algebras in natural way. The theory of Leibniz algebras has been actively investigated in the last two decades. Many results of the theory of Lie algebras have been extended to Leibniz algebras. For instance, the classical results on Cartan subalgebras \cite{Cartan}, Levi's decomposition \cite{barnesLevi}, the properties of solvable algebras with given nilradical \cite{SolNul} and other from the theory of Lie algebras are also true for Leibniz algebras \cite{ayuom,barnesEngel,patsourakos}.

As far as physical applications are concerned, we note that solvable Lie algebras often occur as Lie algebras of symmetry groups of differential equations \cite{olver}. Group invariant solutions can be obtained by symmetry reduction, using the subalgebras of the symmetry algebra \cite{winternitz}. In this procedure an important step is to identify the symmetry algebra and its subalgebras as abstract Lie algebras. A detailed identification presupposes the existence of a classification of Lie algebras into isomorphism classes.

In this paper we continue the description of solvable algebras with a given nilradical. The first work was devoted to the description of such Lie algebras under some condition (see \cite{Malcev}) the complemented space to nilradical forms abelian subalgebra, consisting of semisimple elements of the algebra. However, the structure of nilradical depends on this subalgebra. Later, Mubarakzjanov G.M. proposed the description of solvable Lie algebras with a given structure of nilradical \cite{muba} by means of outer derivations of the nilradical. Papers \cite{ancochea1,ancochea2,Campo,ndogmo,tremblay,wang} were devoted to the application of Mubarakzjanov's method for solvable Lie algebras with different kinds of nilradicals. Some results of the Lie algebra theory generalized to Leibniz algebras in \cite{ayuom} allow us to apply the Mubarakzjanov's method to the case of Leibniz algebras.  In this direction papers \cite{SolNul} and \cite{SolGFil} deal with the description of solvable Leibniz algebras with null-filiform and naturally graded filiform nilradicals, respectively.

The aim of the present paper is to classify solvable Leibniz algebras with filiform nilradical. Thanks to papers \cite{vigoYu} and \cite{abror}, we already have the classification of filiform Leibniz algebras. It should be noted that the description of Lie algebras with filiform nilradicals obtained in this work is a new result for Lie algebras, as well.

In section 2 the necessary definitions and results for understanding the main parts of the paper are given. In section 3 the classification of solvable Leibniz algebras with filiform non-Lie Leibniz nilradicals is obtained. The last section deals with the description of solvable Leibniz algebras with filiform Lie nilradical.

Throughout the paper we consider finite-dimensional vector spaces and algebras over the field of the complex numbers. Moreover, in the multiplication table of an algebra omitted products are assumed to be zero and if it is not noted, we shall
consider non-nilpotent solvable algebras.

\section{Preliminaries}

In this section we give the necessary definitions and preliminary results on outer derivations of nilradicals of Leibniz algebras and the descriptions of filiform Leibniz algebras.

\begin{defn} An algebra $(L,[-,-])$ over a field $F$ is called a Leibniz algebra if for any $x,y,z\in L$, the so-called Leibniz identity
\[ \big[[x,y],z\big]=\big[[x,z],y\big]+\big[x,[y,z]\big] \] holds.
\end{defn}

From the Leibniz identity we conclude that for any $x, y\in\ll$ the elements $[x,x]$ and $[x,y]+[y,x]$ lie in  the right annihilator of the algebra $\ll$ (denoted by $Ann_r(\ll)=\{x\in \ll:\ [y,x]=0, \ \mbox{ for all }y\in\ll\}$).

\begin{defn} A linear map $d:\ll \rightarrow \ll$ of a Leibniz algebra $\ll$ is said to be a derivation if for any $x, y\in \ll,$ the following condition holds:
$$d([x,y])=[d(x),y]+[x,d(y)].$$
\end{defn}

For a given element $x$ of a Leibniz algebra $\ll$ the operator of right multiplication ${R}_x:\ll \rightarrow \ll$, defined as ${R}_x(y)=[y,x]$ for $y\in \ll$, is a derivation. This kind of derivations are called \textit{inner derivations}.

Any Leibniz algebra $\ll$ is associated with the algebra of right multiplications ${R}(\ll)=\{{R}_x | \ x\in \ll\}$, which endowed with a structure of Lie algebra by means of the bracket $[{R}_x,{R}_y]={R}_x{R}_y-{R}_y {R}_x.$ The equality
$[{R}_x,{R}_y]={R}_{[y,x]}$ holds true because of Leibniz identity and an antisymmetric isomorphism between ${R}(\ll)$ and the quotient algebra $\ll/Ann_r(\ll)$ is established.

\begin{defn}\cite{muba}
Let $d_1,$ $d_2,$ $\dots,$ $d_n$ be derivations of a Leibniz algebra $\ll.$ The derivations
$d_1,$ $d_2,$ $\dots,$ $d_n$ are said to be nil-independent if
$$\alpha_1 d_1+\alpha_2 d_2+\cdots +\alpha_n d_n$$
is not nilpotent for any scalars $\alpha_1,\alpha_2,\dots,\alpha_n \in F.$

In other words, if for any $\alpha_1,\alpha_2,\dots,\alpha_n \in F$ there exists a natural number $k$ such that
$(\alpha_1 d_1+\alpha_2 d_2+\cdots +\alpha_n d_n)^k=0,$ then $\alpha_1=\alpha_2=\cdots=\alpha_n=0.$
\end{defn}

For a Leibniz algebra $\ll,$ the sequences of two-sided ideals defined recursively as follows:
$$ L^1=L,\qquad L^{k+1}=[L^k,L^1], \qquad k \geq 1,\quad \qquad \ll^{[1]}=\ll,\qquad \ll^{[s+1]}=[\ll^{[s]}, \ll^{[s]}],\quad s\geq 1 $$ are said to be {\it the lower central} and {\it the derived series of $\ll,$} respectively.

\begin{defn} A Leibniz algebra $L$ is called
nilpotent (respectively, solvable) if there exists  $m\in\mathbb N $ ($t\in \mathbb{N}$) such that $L^{m-1}\neq 0$ and $L^m=0$ (respectively, $\ll^{[t-1]}\neq 0$ and $\ll^{[t]}= 0$).
The minimal number $m$ (respectively, $t$) with such property is said to be the index of nilpotency (respectively, solvability) of the algebra $\ll.$
\end{defn}

Evidently, the index of nilpotency of an $n$-dimensional algebra is not greater than $n+1.$

Since the sum of two nilpotent ideals is nilpotent ideal, we can consider the maximal nilpotent ideal of a Leibniz algebra. The maximal nilpotent ideal is called {\it nilradical}. Notice that the nilradical is not the radical in the sense of Kurosh, because the quotient Leibniz algebra by its nilradical may contain a nilpotent ideal (see \cite{jacobson}).

\begin{defn} An $n$-dimensional Leibniz algebra $L$ is said to be filiform if
$\dim L^i=n-i$ for $2\leq i \leq n$.
\end{defn}

Let $R$ be a solvable Leibniz algebra. Then it can be decomposed in the form $R=N\oplus Q,$ where $N$ is the nilradical and $Q$ is the complementary vector space. Since the square of a solvable Leibniz algebra is contained into nilradical \cite{ayuom}, we get the nilpotency of the ideal $R^2$ and consequently, $Q^2\subseteq N.$

\begin{thm}\cite{SolNul} \label{thm26}
Let R be a solvable Leibniz algebra and $N$ be its nilradical. Then the dimension of the complementary vector space to N is not greater than the maximal number of nil-independent derivations of N.
\end{thm}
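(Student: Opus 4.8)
The plan is to realize the complementary space $Q$ as a space of derivations of $N$ and to show these derivations are nil-independent. Write $R=N\oplus Q$ as a direct sum of vector spaces. For $x\in Q$ the right multiplication $R_x$ is an inner derivation of $R$, and since $N$ is an ideal, $R_x(N)=[N,x]\subseteq N$, so $R_x|_N\in\mathrm{Der}(N)$; moreover the map $\psi\colon Q\to\mathrm{Der}(N)$, $\psi(x)=R_x|_N$, is linear. I would reduce the theorem to the single assertion
\[ \text{if } x\in Q \text{ and } R_x|_N \text{ is nilpotent, then } x=0. \]
Granting this: $\psi$ is injective (the zero operator is nilpotent), and $\psi(x)$ is non-nilpotent for every $x\in Q\setminus\{0\}$; hence any basis of the image $\psi(Q)$ is a nil-independent family of $\dim Q=\dim\psi(Q)$ derivations of $N$, which is precisely the claimed inequality.

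To prove the reduced assertion I would argue by contradiction. Suppose $R_x|_N$ is nilpotent with $x\neq0$, and set $M=N+\mathbb{C}x$. Using that $R^2\subseteq N$ (the square of a solvable Leibniz algebra lies in the nilradical) and that $N$ is an ideal, one gets $[M,R]+[R,M]\subseteq N+R^2\subseteq N\subseteq M$, so $M$ is a two-sided ideal strictly containing $N$. If I can show that $M$ is nilpotent, this contradicts the maximality of the nilradical and forces $x\in N\cap Q=\{0\}$.

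For the nilpotency of $M$ one natural route is the Leibniz-algebra version of Engel's theorem (\cite{barnesEngel,patsourakos}): it suffices that $R_m|_M$ be a nilpotent operator for every $m\in M$. Since $R^2\subseteq N$ and $N$ is an ideal, $R_m(M)=[M,m]\subseteq N$ for every $m\in M$; hence, with respect to a basis of $M$ adapted to the subspace $N$, the operator $R_m|_M$ is block strictly upper triangular with diagonal blocks $R_m|_N$ and the zero operator on $M/N$, so $R_m|_M$ is nilpotent if and only if $R_m|_N$ is. Writing $m=n+\alpha x$ with $n\in N$, the task reduces to showing that $R_n|_N+\alpha R_x|_N$ is nilpotent on $N$.

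This last point is the main obstacle: $R_n|_N$ is nilpotent (because $N$ is a nilpotent algebra, by Engel) and $\alpha R_x|_N$ is nilpotent by hypothesis, but a sum of nilpotent operators need not be nilpotent, so one cannot argue termwise. I would instead analyse the lower central series of $M$ directly. One has $M^2\subseteq N$; then, moving the brackets involving $x$ to the outside of iterated products by repeated application of the Leibniz identity — each such exchange only introduces additional brackets with elements of $N$, since $[x,N]\subseteq R^2\subseteq N$ — and using that $R_x|_N$ is a derivation of the nilpotent algebra $N$, one establishes by induction an inclusion of the form
\[ M^{k+1}\subseteq\sum_{i+j=k}(R_x|_N)^{\,i}\bigl(N^{\,j+1}\bigr)\subseteq N,\qquad k\ge1. \]
Since $N$ is nilpotent and $R_x|_N$ is nilpotent, every term on the right-hand side vanishes once $k$ exceeds the sum of the two relevant indices of nilpotency, so $M^{k+1}=0$ for large $k$ and $M$ is nilpotent. (Equivalently, this is the standard fact that the extension of a nilpotent Leibniz algebra by a single element acting nilpotently by right multiplication is again nilpotent, and the bookkeeping in the induction above is the technical heart of the argument.) This completes the plan.
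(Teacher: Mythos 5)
Your reduction is sound and follows the standard Mubarakzjanov-type argument behind the cited result (the paper itself does not prove Theorem \ref{thm26}; it quotes it from \cite{SolNul}): the map $\psi(x)=R_x|_N$ is linear, so nil-independence of a basis of $\psi(Q)$ does reduce to the single assertion, and $M=N+\mathbb{C}x$ is indeed a two-sided ideal because $R^2\subseteq N$. The genuine gap is in what you yourself call the technical heart: the claimed inclusion $M^{k+1}\subseteq\sum_{i+j=k}(R_x|_N)^{i}(N^{j+1})$ is false. Take $N$ the Heisenberg algebra with $[p,q]=z$, and let $D$ be the nilpotent derivation $D(p)=q$, $D(q)=D(z)=0$, with $x$ acting on $N$ by right multiplication as $D$ (so all your hypotheses hold: $N$ is a nilpotent ideal of $M=N+\mathbb{C}x$, $D=R_x|_N$ is nilpotent, $[x,M]\subseteq N$). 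Then $M^2=\mathrm{span}\{q,z\}$ and $M^3=[M^2,M]\ni[q,p]=-z$, so $M^3\neq 0$, whereas for $k=2$ your right-hand side is $N^3+D(N^2)+D^2(N)=0$. The mechanism of the failure: when you move $D=R_x$ outside an iterated bracket via $[D^i(a),b]=\sum_{p}(-1)^p\binom{i}{p}D^{i-p}([a,D^p(b)])$, the terms with $p>0$ lose degree in $D$ without gaining depth in the lower central series of $N$, so the weight $i+j$ is not preserved. In the Leibniz setting even the base case $M^2\subseteq N^2+D(N)$ is unjustified, since $[x,n]$ and $[x,x]$ are only known to lie in $N$.

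The conclusion you need (that $M$ is nilpotent) is nevertheless true, and the correct bookkeeping is a flag refinement rather than a single-weight induction. Each $N^i$ is $D$-invariant; on each quotient $N^i/N^{i+1}$ right multiplication by $N$ acts trivially and right multiplication by $x$ acts by the nilpotent operator induced by $D$. Setting $G_{i,s}=N^{i+1}+\sum_{t\geq s}D^t(N^i)$ one checks $[G_{i,s},N]\subseteq N^{i+1}$ and $[G_{i,s},x]\subseteq G_{i,s+1}$, so the chain $M=F_0\supseteq F_1=N\supseteq F_2\supseteq\cdots\supseteq F_m=0$ obtained by concatenating these refinements satisfies $[F_j,M]\subseteq F_{j+1}$ (the step $[M,M]\subseteq N$ uses $R^2\subseteq N$ and absorbs the left $x$-products you worried about). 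Then $M^{j+1}\subseteq F_j$ gives nilpotency, with index bounded roughly by the product, not the sum, of the nilpotency indices of $N$ and of $D$. With this replacement your argument closes and coincides in substance with the proof of the theorem in \cite{SolNul}.
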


A nilpotent Leibniz algebra is called \textit{characteristically nilpotent} if all its derivations are nilpotent.
If the nilradical $N$ of a Leibniz algebra is a characteristically nilpotent then, according to Theorem \ref{thm26}, a solvable Leibniz algebra is nilpotent. Therefore, we shall consider solvable Leibniz algebras with non-characteristically nilpotent filiform nilradicals. The paper is divided into two parts, the former deals with non-characteristically nilpotent filiform non-Lie Leibniz nilradicals and the latter one we describe solvable Leibniz algebras with non-characteristically nilpotent filiform Lie nilradicals.

\begin{thm}\label{yu}\cite{vigoYu}
Let $\mathfrak{g}$ be an $(n+1)$-dimensional non-characteristically nilpotent filiform Lie algebra. Then, it is isomorphic to one of the following non-isomorphic algebras:
$$\begin{array}{ll}
L_n:&\qquad \quad \quad Q_n,\ n-odd:\\[5mm]
\left\{\begin{array}{ll}
[e_0,e_i]=e_{i+1},&1\leq i\leq n-1;
\end{array}\right.& \qquad \quad \quad
\left\{\begin{array}{ll}
[e_0,e_i]=e_{i+1},&1\leq i\leq n-2,\\{}
[e_i,e_{n-i}]=(-1)^{i}e_n,&1\leq i\leq n-1;
\end{array}\right.
\end{array}$$
$$\begin{array}{l}
A_{n+1}^r(\alpha_1,\dots,\alpha_t),\ 1\leq r\leq n-3,\ t=\lfloor\frac{n-r-1}{2}\rfloor:\\[5mm]
\begin{array}{ll}
[e_0,e_i]=e_{i+1},& 1\leq i\leq n-1,\\{}
[e_i,e_j]=\left(\sum\limits_{k=i}^t(-1)^{k-i}\alpha_k
\left (\begin{array}{c}
j-k-1\\
k-i
\end{array} \right )\right) e_{i+j+r},
 & 1\leq i<j\leq n-2, \ i+j+r\leq n;
 \end{array}\\[10mm]

 B_{n+1}^r(\alpha_1,\dots,\alpha_t),\ 1\leq r\leq n-3,\ t=\lfloor\frac{n-r-2}{2}\rfloor,\ n-odd:\\[5mm]

\begin{array}{ll}
[e_0,e_i]=e_{i+1},& 1\leq i\leq n-2,\\{}
[e_i,e_{n-i}]=(-1)^ie_n, & 1\leq i\leq n-1,\\{}
[e_i,e_j]=\left(\sum\limits_{k=i}^t(-1)^{k-i}\alpha_k
\left (\begin{array}{c}
j-k-1\\
k-i
\end{array} \right )
\right)e_{i+j+r}, & 1\leq i,j\leq n-1, \ i+j+r\leq n-1,
\end{array}
 \end{array}$$
where the parameters $(\alpha_1,\dots,\alpha_t)$ satisfy the polynomial relations emanating from the Jacobi identity and at least one parameter $\alpha_i\neq 0.$
\end{thm}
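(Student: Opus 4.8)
The plan is to reduce the classification to an analysis of the gradings that a non-nilpotent derivation induces on a filiform Lie algebra. Since $\mathfrak{g}$ is not characteristically nilpotent it has a non-nilpotent derivation $d$, and over $\mathbb{C}$ the additive Jordan decomposition $d=d_s+d_n$ has both components in $\mathrm{Der}(\mathfrak{g})$; non-nilpotence of $d$ forces $d_s\neq 0$. Hence $\mathfrak{g}=\bigoplus_{\lambda}\mathfrak{g}_\lambda$ is graded by the eigenvalues of $d_s$, with $[\mathfrak{g}_\lambda,\mathfrak{g}_\mu]\subseteq\mathfrak{g}_{\lambda+\mu}$. First I would fix an adapted basis $e_0,e_1,\dots,e_n$ for the filiform structure — so that $[e_0,e_i]=e_{i+1}$ for $1\leq i\leq n-1$ and $\mathfrak{g}^{i}=\langle e_i,\dots,e_n\rangle$ for $i\ge 2$ — which, since $d_s$ is semisimple and stabilizes the lower central series, can be taken to consist of $d_s$-eigenvectors. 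Writing $d_s(e_0)=\alpha e_0$ and $d_s(e_1)=\beta e_1$ and applying $d_s$ to $[e_0,e_i]=e_{i+1}$ inductively gives $d_s(e_i)=\bigl(\beta+(i-1)\alpha\bigr)e_i$; since $d_s\neq 0$ we have $(\alpha,\beta)\neq(0,0)$, and after rescaling we may normalize $\alpha$, or, when $\alpha=0$, $\beta$.

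Next I would pin down the remaining brackets $[e_i,e_j]$ with $1\leq i<j$. Each lies in $\mathfrak{g}^2=\langle e_2,\dots,e_n\rangle$ and in the weight space of weight $2\beta+(i+j-2)\alpha$, hence is a scalar multiple of a single basis vector $e_{i+j+r}$, where the shift $r$ is an integer determined by the ratio $\beta/\alpha$ (the degenerate values, together with $\alpha=0$ or $\beta=0$, produce the model algebra $L_n$ and, when $n$ is odd, $Q_n$). The skeleton of the multiplication is thus fixed, and the structure constants are obtained by propagating the first row: putting $[e_1,e_j]=\alpha_j e_{j+1+r}$ and feeding $[[e_0,e_1],e_j]=[[e_0,e_j],e_1]+[e_0,[e_1,e_j]]$ into an induction on $i$ expresses every $[e_i,e_j]$ as an explicit combination of the $\alpha_k$, and a Vandermonde-type identity collapses this into the displayed binomial-coefficient formula; the free parameters $\alpha_1,\dots,\alpha_t$ are precisely the initial data not yet forced, with $t$ counted by the condition $i+j+r\leq n$ (resp. $\leq n-1$). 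Finally, skew-symmetry and Jacobi show that a top bracket $[e_i,e_{n-i}]=(-1)^i e_n$ is compatible exactly when $n$ is odd; including or excluding it splits each graded family into the pair $(A_{n+1}^r, B_{n+1}^r)$ alongside $(L_n, Q_n)$.

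The main obstacle is the combinatorial core of the argument, in two parts. First, one must verify that the Jacobi identity on \emph{all} triples — not merely those containing $e_0$ and $e_1$ — is equivalent to the polynomial relations among $\alpha_1,\dots,\alpha_t$ stated in the theorem, with no further constraints; this is a delicate induction with several boundary cases near the top of the filtration, where $i+j+r$ is close to $n$. Second, one must prove the list is irredundant: that the displayed algebras are pairwise non-isomorphic and that every admissible parameter choice reduces to one of them. For this I would compute the group of automorphisms preserving the filiform flag, describe its (affine) action on the parameter vector $(\alpha_1,\dots,\alpha_t)$, and exhibit invariants — the shift $r$, the parity of $n$, the length of the nilpotency filtration, and the least $k$ with $\alpha_k\neq0$ — that separate the four families and their members. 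As an organizing principle and a cross-check I would also record that these families exhaust the infinitesimal deformations of the model filiform algebra $L_n$, i.e. are governed by the relevant part of $H^2(L_n;L_n)$, which explains a priori why the classification takes this shape.
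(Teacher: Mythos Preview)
The paper does not prove this theorem at all: Theorem~\ref{yu} is quoted verbatim from \cite{vigoYu} and used as background input, so there is no ``paper's own proof'' against which to compare your proposal. Your sketch is therefore not competing with anything in the present paper; it is an outline of how one might reprove Khakimdjanov's classification.

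As an outline it is broadly reasonable --- exploiting a semisimple derivation to grade $\mathfrak{g}$, choosing an adapted eigenbasis, and then reading off the shift $r$ from the weight arithmetic is the standard route --- but several points are imprecise or slightly off. First, $L_n$ and $Q_n$ are not ``degenerate values'' of the grading parameters; they are the naturally graded filiform Lie algebras, obtained when all the extra structure constants vanish, and they sit at the base of the $A$- and $B$-families respectively rather than arising from $\alpha=0$ or $\beta=0$. Second, the split between the $A$- and $B$-families is not simply ``including or excluding'' a top bracket on top of the same algebra: the $B$-family requires $n$ odd, has $[e_0,e_{n-1}]=0$ (the range is $1\le i\le n-2$, not $n-1$), and the constraint $i+j+r\le n-1$ rather than $\le n$; these are genuinely different skeletons, not a single family with an optional cocycle. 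Third, your indexing $[e_1,e_j]=\alpha_j e_{j+1+r}$ does not match the statement, where the parameters are $\alpha_1,\dots,\alpha_t$ with $t=\lfloor(n-r-1)/2\rfloor$ indexed by the \emph{smaller} factor in the bracket, not by $j$. Finally, you correctly flag the two hard parts (closure under Jacobi and pairwise non-isomorphism), but your plan for the latter --- exhibiting $r$, parity, and the first nonzero $\alpha_k$ as separating invariants --- is not enough: different parameter tuples with the same such data can still be isomorphic, and the actual moduli question is subtler than a list of discrete invariants.
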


In the following theorem, all $(n+1)$-dimensional filiform Leibniz algebras decomposes into three families of algebras.

\begin{thm}\cite{OmRa} \label{thm28}
Any complex $(n+1)$-dimensional filiform Leibniz algebra admits a basis $\{e_0,e_1,\dots,e_n\}$ such that the table of multiplication of the algebra has one of the following
forms:
$$\begin{array}{rl}
F_1(\alpha_3,\alpha_4,\dots,\alpha_n,\theta):&\left\{\begin{array}{ll}
[e_0,e_0]=e_2,&\\{}
[e_i,e_0]=e_{i+1},&1\leq i\leq n-1,\\{}
[e_0,e_1]=\displaystyle\sum_{k=3}^{n-1} \alpha_k e_k+\theta e_n,&\\{}
[e_i,e_1]=\displaystyle\sum_{k=i+2}^n \alpha_{k+1-i} e_k,&1\leq i\leq n-2;
\end{array}\right.\\[15mm]

F_2(\beta_3,\beta_4,\dots,\beta_n,\gamma):&\left\{\begin{array}{ll}
[e_0,e_0]=e_2,&\\{}
[e_i,e_0]=e_{i+1},&2\leq i\leq n-1,\\{}
[e_0,e_1]=\displaystyle\sum_{k=3}^{n} \beta_k e_k,&\\{}
[e_i,e_1]=\displaystyle\sum_{k=i+2}^n \beta_{k+1-i} e_k,&2\leq i\leq n-2;
\end{array}\right.\\[15mm]

F_3(\theta_1,\theta_2,\theta_3):&\left\{\begin{array}{ll}
[e_i,e_0]=e_{i+1},& 1\leq i\leq n-1,\\{}
[e_0,e_i]=-e_{i+1},&2\leq i\leq n-1,\\{}
[e_0,e_0]=\theta_1 e_n,&\\{}
[e_0,e_1]=-e_2+\theta_2 e_n,&\\{}
[e_1,e_1]=\theta_3 e_n,&\\{}
[e_i,e_j]=-[e_j,e_i]\in lin <e_{i+j+1},e_{i+j+2},\dots,e_n>,&1\leq i\leq n-2,\\{}
& 2\leq j\leq n-i,\\{}
[e_i,e_{n-i}]=-[e_{n-i},e_i]=\alpha (-1)^{i} e_n ,&1\leq i\leq n-1,
\end{array}\right.
\end{array}$$
where $\alpha\in \{0,1\}$ for odd $n$ and $\alpha=0$ for even $n$. Moreover, the structure constants of an algebra from $F_3(\theta_1,\theta_2,\theta_3)$ should satisfy the Leibniz identity.
\end{thm}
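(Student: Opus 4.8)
The plan is to reconstruct the multiplication table from the filiform flag: choose a basis adapted to the lower central series, pin down the few ``seed'' products through a short case analysis, propagate the Leibniz identity to recover all remaining products, and finish with a round of basis changes producing the three normal forms.

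\textbf{Step 1 (adapted basis).} Write $\dim L=n+1$, so $\dim L^k=n+1-k$ for $2\le k\le n+1$: $L^2$ has codimension $2$, each $L^k$ with $k\ge 2$ has codimension $1$ in $L^{k-1}$, $L^n$ is one-dimensional, and $L^{n+1}=0$. Since $L$ is nilpotent, every $R_x$ is nilpotent, and I would first argue, exactly as in the filiform Lie case, that filiformness forces some $e_0\in L\setminus L^2$ for which $R_{e_0}$ has nilpotency index $n$, i.e. Jordan type $(n,1)$ on the $(n+1)$-dimensional space. Taking a cyclic vector $e_1$ for the long block and setting $e_{i+1}:=[e_i,e_0]=R_{e_0}(e_i)$ for $1\le i\le n-1$ yields a basis $\{e_0,e_1,\dots,e_n\}$ adapted to the lower central series, with $L^k=\langle e_k,\dots,e_n\rangle$ for $k\ge 2$ and $[e_n,e_0]=0$.

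\textbf{Step 2 (seed products and the trichotomy).} The products among generators not yet fixed are $[e_0,e_0],[e_0,e_1],[e_1,e_1]\in L^2$. The decisive invariant is the quadratic map $x\mapsto[x,x]$ on $L\setminus L^2$: either some generator has $[x,x]\notin L^3$, or $[x,x]\in L^n$ for every generator. In the first case, rescaling $e_1$ (hence all $e_i$) and adjusting $e_0$ by elements of $L^2$ normalizes $[e_0,e_0]$ to $e_2$; a further normalization of $[e_1,e_0]$, which is forced either to equal $e_2$ or to lie in $L^3$, separates $F_1$ (normalized value $e_2$) from $F_2$ (value $0$), while $[e_1,e_1]$ can be absorbed. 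In the second case $[e_0,e_0]\in L^n$ and, crucially, the Leibniz identity forces $[e_0,e_i]\equiv-[e_i,e_0]\pmod{L^n}$ for all $i$, so $L$ is ``almost antisymmetric''; this, together with rescaling, produces $F_3$ with its antisymmetric tail and the single sign parameter $\alpha$, which must vanish for even $n$ by the very relations that constrain naturally graded filiform Lie algebras (cf. Theorem~\ref{yu}).

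\textbf{Step 3 (propagation and final normalization).} With the seeds fixed, the Leibniz identity with $e_0$ in the last slot, $[[x,y],e_0]=[[x,e_0],y]+[x,[y,e_0]]$, expresses $[e_{i+1},e_j]$ through products already computed, hence determines every $[e_i,e_j]$ recursively; this is what yields the displayed closed formulas for $[e_i,e_1]$ and, in $F_3$, for $[e_i,e_{n-i}]$. The basis changes still available are precisely those preserving the adapted shape ($e_0\mapsto\sum a_ke_k$ with $a_0\ne0$, $e_1\mapsto\sum b_ke_k$ with $b_1\ne0$); I would use them to clear redundant constants and reach the parametrized families $F_1(\alpha_3,\dots,\alpha_n,\theta)$, $F_2(\beta_3,\dots,\beta_n,\gamma)$, $F_3(\theta_1,\theta_2,\theta_3)$, recording at the end that the only surviving constraints are the polynomial identities obtained by imposing the Leibniz identity on triples of basis elements. \textbf{The hard part} is the bookkeeping in Step 3 — organizing the recursion so the Leibniz identity pins down each $[e_i,e_j]$ consistently (no overdetermination) and then running the normalization to prune parameters — together with the part of Step 2 that shows the trichotomy is exhaustive and independent of the initial choice of $e_0$; small $n$, where several index ranges in the tables degenerate or are empty, needs separate attention.
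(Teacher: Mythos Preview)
The paper does not prove this theorem: it is quoted verbatim from \cite{OmRa} as a known classification result, with no argument supplied. There is therefore no ``paper's own proof'' to compare your proposal against.

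That said, your outline is the right shape and matches the strategy of the original source: pick $e_0$ so that $R_{e_0}$ has maximal nilpotency index, build an adapted basis from the lower central series, split into cases according to the seed products $[e_0,e_0],[e_1,e_0],[e_0,e_1],[e_1,e_1]$, and propagate via the Leibniz identity. One point where your sketch is slightly misaligned with the stated normal forms: in Step~1 you \emph{define} $e_2=[e_1,e_0]$, which already forces $[e_1,e_0]=e_2$, yet in $F_2$ the table has $[e_1,e_0]=0$ and it is $e_0$ (not your cyclic vector $e_1$) that generates the chain $e_2,e_3,\dots$ via $[e_0,e_0]=e_2$. So the trichotomy really hinges on whether the cyclic vector for the long Jordan block of $R_{e_0}$ can be taken to be $e_0$ itself (leading to $F_1,F_2$ after a further split on $\ker R_{e_0}$) or must be an independent $e_1$ (leading to $F_3$); your Step~2 as written does not quite capture that relabeling. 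You flag this yourself as part of the ``hard part'', but it is worth being explicit that the $e_0,e_1$ you build in Step~1 are not yet the $e_0,e_1$ of the final tables, and the case split should be organized around the position of $e_0$ relative to the Jordan block rather than around $[e_1,e_0]$.
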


It is easy to see that algebras of the first and the second families are non-Lie algebras. Moreover, a Leibniz algebra of the third family is a Lie algebra if and only if $(\theta_1,\theta_2,\theta_3)=(0,0,0).$

From the list of Theorem \ref{thm28} we only indicate non-characteristically nilpotent filiform non-Lie Leibniz algebras.

\begin{thm}\cite{abror} \label{abror}
An arbitrary non-characteristically nilpotent filiform non-Lie Leibniz algebra is isomorphic to one of the following non-isomorphic algebras:

\begin{itemize}
\item $\begin{array}{l}
F_1(0,0,\dots,0,1)\ \mbox{ and }\
F_1^s(\alpha_3,\alpha_4,\dots,\alpha_{n-1},\alpha_n,\alpha_n),\ 3\leq s\leq n,
\end{array}$
where

$\alpha_k=\left\{\begin{array}{ll}
0,&k\not \equiv s\ \mbox{\rm mod (s-2) }\\[1mm]
(-1)^t C_{t+1}^{s-1},&k\equiv s\ \mbox{\rm mod (s-2) }
\end{array}\right.$
and $t=\d\frac{k-s}{s-2},$ $ 3\leq k\leq n$ and $C_n^p$ is the p-th Catalan number,\\

\item

\begin{itemize}
\item[$\checkmark $] for even $n$:

 $F_{2}(0,0,\dots,0,0,1)$ and $F_2^j(0,0,\dots,0,\underbrace{1}_j,0,\dots,0,0,0)$ with $n\geq 4$ and $3\leq j\leq n,$

\item[$ \checkmark $] for odd $n$:

 $F_{2}^1(0,0,\dots,0,\beta_{\frac{n+2}{2}},0,\dots,0,0,1)$ and $F_2^j(0,0,\dots,0,\underbrace{1}_j,0,\dots,0,0,0)$ with $n\geq 4$ and $3\leq j\leq n,$
\end{itemize}

\

\item $F_3(1,0,0),\quad F_3(0,1,0), \quad F_3(0,0,1).$
\end{itemize}
\end{thm}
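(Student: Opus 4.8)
The plan is to start from the trichotomy of Theorem~\ref{thm28}: every $(n+1)$-dimensional filiform non-Lie Leibniz algebra is isomorphic to a member of one of the families $F_1(\alpha_3,\dots,\alpha_n,\theta)$, $F_2(\beta_3,\dots,\beta_n,\gamma)$ or $F_3(\theta_1,\theta_2,\theta_3)$, and for each family to carry out two steps: (i) single out the values of the structure constants for which the algebra is \emph{not} characteristically nilpotent, i.e.\ admits a non-nilpotent derivation; and (ii) among those, reduce the parameters to the canonical forms of the statement using the automorphisms preserving the family, and verify that the algebras so obtained are pairwise non-isomorphic.

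For step~(i) I would compute $\mathrm{Der}(L)$ explicitly. Each of the three families is generated as an algebra by $e_0$ and $e_1$, whose classes span $L/L^2$, so a derivation $d$ is determined by $d(e_0)$ and $d(e_1)$; writing $d(e_0)=\sum a_ie_i$, $d(e_1)=\sum b_ie_i$ and propagating through the relations $[e_i,e_0]=e_{i+1}$ one obtains every $d(e_k)$ as a linear expression in the $a_i,b_i$. Imposing the remaining products (the relations involving $[e_i,e_1]$, and for $F_3$ the skew-symmetric ones) together with the Leibniz identity gives a linear system whose solution space is $\mathrm{Der}(L)$, and the algebra is non-characteristically nilpotent exactly when this space contains an operator with a nonzero eigenvalue. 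The efficient way to organise the analysis is to pass to the semisimple part of such an operator, which is again a derivation (the Jordan decomposition of derivations persists for Leibniz algebras as in the Lie case, cf.\ \cite{ayuom}) and, after a change of basis adapted to the lower central series, is diagonal in a basis $e_0,\dots,e_n$ of the form of Theorem~\ref{thm28}; its weights $\lambda_i$ are forced by the relations $[e_i,e_0]=e_{i+1}$ and $[e_0,e_0]=e_2$ (resp.\ $[e_1,e_0]=e_2$ in $F_3$) to be determined by $\lambda_0$ and $\lambda_1$ alone. One can alternatively avoid Jordan decomposition and read everything off the explicit linear system, at the cost of more computation.

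The crux is then that every defining relation must be homogeneous for this grading. Applied to $[e_0,e_1]=\sum\alpha_ke_k+\theta e_n$ and to $[e_i,e_1]=\sum\alpha_{k+1-i}e_k$, this forces all but a periodic subset of the $\alpha_k$ to vanish: if $s$ is the least index with a nonzero parameter, then only the $\alpha_k$ with $k\equiv s\pmod{s-2}$ can be nonzero, and $\lambda_1$ is pinned down in terms of $\lambda_0$ and $s$ (the degenerate case $\alpha_3=\dots=\alpha_n=0$, $\theta\neq 0$ producing the separate algebra $F_1(0,\dots,0,1)$). Substituting back into the Leibniz identity for the products $[e_i,e_j]$ yields a recursion among the surviving $\alpha_k$; solving it, after using the scaling automorphism to normalise $\alpha_s=1$, produces precisely the closed form with the Catalan numbers $C_{t+1}^{s-1}$ and the signs $(-1)^t$ of the statement. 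The same analysis applied to $F_2$ leaves a single surviving parameter $\beta_j$, with the extra free parameter $\beta_{\frac{n+2}{2}}$ in the odd-$n$ case arising from a second admissible choice of weight. For $F_3$ the homogeneity argument again forces $(\theta_1,\theta_2,\theta_3)$ to have at most one nonzero coordinate and the remaining skew products to vanish; scaling normalises the surviving coordinate to $1$, leaving $F_3(1,0,0)$, $F_3(0,1,0)$, $F_3(0,0,1)$, each of which one checks directly to admit a non-nilpotent derivation.

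Finally I would establish that the listed algebras are pairwise non-isomorphic, using isomorphism invariants such as $\dim Ann_r(L)$, the characteristic sequence, whether $[e_0,e_0]$ lies in $L^3$ (separating $F_3$ from $F_1,F_2$), and the weight data of a maximal torus of $\mathrm{Der}(L)$; in particular the normalisation index $s$ (resp.\ $j$) and the parity of $n$ are recovered this way. I expect the main obstacle to be exactly the $F_1$ family: extracting the precise Catalan-number closed form from the Leibniz identity, verifying that each such algebra is genuinely filiform and non-characteristically nilpotent, and proving that $\{F_1^s\}_{3\le s\le n}$ together with $F_1(0,\dots,0,1)$ are mutually non-isomorphic with no further coincidences. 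The $F_2$ part runs in parallel but is lighter, and $F_3$ reduces to a finite verification.
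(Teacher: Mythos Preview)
The paper does not prove this theorem; it is quoted verbatim from \cite{abror}, so there is no in-paper proof to compare your proposal against. What the present paper does record from \cite{abror} are the explicit derivation matrices of the three families (Propositions~\ref{pr31}, \ref{pr34}, \ref{pr38}), and your strategy is precisely the one those propositions encode: the constraint equations listed there are your ``homogeneity'' conditions, and in particular the relation $\alpha_k\bigl(a_1-(k-2)a_0\bigr)=\tfrac{k}{2}\,a_1\sum_{j=4}^{k}\alpha_{j-1}\alpha_{k-j+3}$ in Proposition~\ref{pr31} is exactly the quadratic recursion which, after fixing $\alpha_s=1$ and $a_1=(s-2)a_0$, outputs the Catalan numbers. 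So your outline matches the approach of the source paper.

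One caution: your invocation of the Jordan decomposition of a derivation (semisimple part again a derivation) is not something \cite{ayuom} establishes for Leibniz algebras, and it is not needed here. As you yourself note, the explicit linear systems of Propositions~\ref{pr31}, \ref{pr34}, \ref{pr38} already give the diagonal entries $ia_0+b_1$ (resp.\ $(i-1)a_0+b_1$) directly, so non-nilpotency is read off without any appeal to Jordan decomposition or to passing to an adapted basis; I would drop that step from the write-up rather than leave it as an unproven aside.
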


\section{Solvable Leibniz algebras with filiform non-Lie Leibniz nilradical}

In this section we investigate solvable Leibniz algebras whose nilradical is one from the list of Theorem \ref{abror}. In order to demonstrate the considering cases for three families of Theorem \ref{thm28}, we divide this section into three subsections.

\subsection{Solvable Leibniz algebras with nilradical a non-characteristically nilpotent algebra of the family $F_1(\alpha_3,\alpha_4,\dots,\alpha_n,\theta)$.}

\

\

It follows the matrix form of any derivation of an algebra of the family
$F_1(\alpha_3, \alpha_4, \dots, \alpha_n, \theta )$.

\begin{prop} \cite{abror}\label{pr31} Any derivation of a filiform Leibniz algebras from the family $F_1( \alpha_3, \alpha_4, \dots, \alpha_n, \theta )$ has the following matrix form:
\[\begin{pmatrix}
 a_0&a_1&a_2&a_3&\dots&a_{n-2}&a_{n-1}&a_n\\
0&a_0+a_1&a_2&a_3&\dots&a_{n-2}&b_{n-1}&b_n\\
 0& 0& 2a_0+a_1&a_2+a_1\alpha_3&\dots&a_{n-3}+a_1\alpha_{n-2}&
a_{n-2}+a_1\alpha_{n-1}&a_{n-1}+a_1\alpha_n\\
0& 0& 0& 3a_0+a_1& \dots&a_{n-4}+2a_1\alpha_{n-2}&
a_{n-3}+2a_1\alpha_{n-1}&a_{n-2}+2a_1\alpha_{n-1}\\
 \vdots& \vdots& \vdots& \vdots&\ddots& \vdots&\vdots &\vdots \\
0& 0& 0& 0&\dots& (n-2)a_0+a_1&a_2+(n-3)a_1\alpha_3 & a_3+ (n-3)a_1\alpha_4\\
 0& 0& 0& 0&\dots& 0&(n-1)a_0+a_1 & a_2+(n-2)a_1\alpha_3\\
 0& 0& 0& 0&\dots& 0& 0& na_0+a_1
\end{pmatrix},\]
where
\[a_0(\theta - \alpha_n) = 0, \quad a_1(\alpha_n - \theta) =
a_{n-1} - b_{n-1}, \quad
 \alpha_3(a_1-a_0)=0,\]
\[
\alpha_k(a_1-(k-2)a_0)=\frac k 2 a_1\sum\limits_{j=4}^{k}\alpha_{j-1}\alpha_{k-j+3}, \quad 4 \leq k \leq n.\]
\end{prop}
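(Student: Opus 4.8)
The plan is to compute a general derivation $d$ of an algebra $L$ in the family $F_1(\alpha_3,\dots,\alpha_n,\theta)$ directly from the defining relation $d([x,y])=[d(x),y]+[x,d(y)]$, exploiting the very rigid filiform structure. First I would write $d(e_i)=\sum_{k=0}^n a_{i,k}e_k$ for $i=0,1$ and use that $e_2=[e_0,e_0]$, $e_{i+1}=[e_i,e_0]$ for $1\le i\le n-1$ force the images $d(e_i)$ for $i\ge2$ to be determined recursively by $d(e_0)$ and $d(e_1)$. Concretely, applying $d$ to $[e_0,e_0]=e_2$ gives $d(e_2)=[d(e_0),e_0]+[e_0,d(e_0)]$, and applying $d$ to $[e_i,e_0]=e_{i+1}$ gives $d(e_{i+1})=[d(e_i),e_0]+[e_i,d(e_0)]$; since right multiplication by $e_0$ is (up to the first two basis vectors) a shift operator, these recursions pin down every entry of the matrix in terms of $a_0:=a_{0,0}$, $a_1:=a_{0,1}$, the $a_j:=a_{0,j}$ for $j\ge2$, and the two extra free parameters $b_{n-1},b_n$ coming from $d(e_1)$ (the ambiguity in the last two coordinates is exactly what the filiform lower central series allows). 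This is a bookkeeping computation: one shows by induction on $i$ that the $i$-th row has the displayed form, where the appearance of $ia_0+a_1$ on the diagonal reflects that $e_i$ sits in ``weight'' $i$ with respect to $e_0$ and weight $1$ with respect to $e_1$, and the entries $a_j+(\text{integer})\,a_1\alpha_k$ off the diagonal come from feeding the bracket $[e_i,e_1]=\sum_{k\ge i+2}\alpha_{k+1-i}e_k$ through the Leibniz rule.

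Next I would extract the constraint equations. These arise from consistency conditions: the recursion for $d(e_{i+1})$ can be set up starting either from $[e_i,e_0]$ or, where applicable, compared against $[e_0,e_1]$ and $[e_i,e_1]$. Applying $d$ to $[e_0,e_1]=\sum_{k=3}^{n-1}\alpha_k e_k+\theta e_n$ and to the relations $[e_i,e_1]=\sum_{k=i+2}^n\alpha_{k+1-i}e_k$, and then equating the two expressions obtained for the same $d(e_m)$, yields precisely the linear relations $a_0(\theta-\alpha_n)=0$, $a_1(\alpha_n-\theta)=a_{n-1}-b_{n-1}$, $\alpha_3(a_1-a_0)=0$, and the family of quadratic-in-$\alpha$ relations $\alpha_k(a_1-(k-2)a_0)=\frac{k}{2}a_1\sum_{j=4}^k\alpha_{j-1}\alpha_{k-j+3}$ for $4\le k\le n$. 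The quadratic terms on the right are not mysterious: they come from the $[e_i,e_j]$-products with $i,j\ge1$, whose structure constants in $F_1$ are themselves quadratic expressions in the $\alpha$'s (this is implicit in Theorem 2.8 and the Leibniz identity for $F_1$), so pushing $d$ through such a bracket produces the convolution sum $\sum_j\alpha_{j-1}\alpha_{k-j+3}$.

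Finally I would check the converse: that any matrix of the displayed shape whose entries satisfy the listed relations actually is a derivation. By linearity and the derivation rule it suffices to verify $d([e_i,e_j])=[d(e_i),e_j]+[e_i,d(e_j)]$ on the generating brackets $[e_0,e_0]$, $[e_i,e_0]$, $[e_0,e_1]$, $[e_i,e_1]$ and then invoke the fact that these generate all products (again by Theorem 2.8); the relations were exactly engineered to make these finitely many identities hold. The main obstacle is purely organizational rather than conceptual: managing the inductive computation of the general row of the matrix and, in particular, correctly tracking the binomial/convolution coefficients so that the quadratic constraint comes out with the precise coefficient $\frac{k}{2}$ and the precise summand range $4\le j\le k$. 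I would handle this by doing the induction step abstractly in terms of the shift action of $R_{e_0}$ and $R_{e_1}$ and only at the end substituting the explicit structure constants of $F_1$, which keeps the $\alpha$-dependence transparent and isolates where the nonlinear terms enter.
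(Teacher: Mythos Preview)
The paper does not prove this proposition; it is quoted from reference~[11] (Khudoyberdiyev--Ladra--Omirov) and stated without proof. Your plan is the natural direct computation and would succeed.

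One small correction to your explanation of where the quadratic-in-$\alpha$ constraints come from: they do not arise from structure constants that are ``themselves quadratic expressions in the $\alpha$'s''. In $F_1$ the only nonzero products are $[e_i,e_0]$, $[e_0,e_1]$ and $[e_i,e_1]$, and their coefficients are linear in the $\alpha$'s. The convolution $\sum_j \alpha_{j-1}\alpha_{k-j+3}$ appears instead because applying $d$ to $[e_i,e_1]=\sum_m \alpha_{m+1-i}e_m$ gives on the left-hand side $\sum_m \alpha_{m+1-i}\, d(e_m)$, and the rows $d(e_m)$ for $m\ge 2$---already computed by your recursion from $[e_m,e_0]$---carry off-diagonal entries of the form $(m-1)a_1\alpha_\ell$; it is the product $\alpha_{m+1-i}\cdot (m-1)a_1\alpha_\ell$ that produces the quadratic terms. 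With that caveat, your recursion-then-consistency strategy is exactly how such results are established.
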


\

{\bf Case $F_1(0,0,\dots,0,1)$.}

\

From Proposition \ref{pr31} we conclude that the number of nil-independent outer derivations of algebra
$F_1(0,0,\dots,0,1)$ is equal to one. Thus, we have that any solvable Leibniz algebra whose nilradical is $F_1(0,0,\dots,0,1)$ has dimension $n+2.$

\begin{prop}
There are not any $(n+2)$-dimensional solvable Leibniz algebras with nilradical $F_1(0,0,\dots,0,1)$.
\end{prop}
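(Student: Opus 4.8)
The plan is to assume such an $(n+2)$-dimensional solvable Leibniz algebra $R$ exists and derive a contradiction. Write $R = N \oplus Q$ where $N = F_1(0,0,\dots,0,1)$ is the nilradical with basis $\{e_0,e_1,\dots,e_n\}$ and $Q$ is two-dimensional, since by the preceding remark the number of nil-independent outer derivations of $N$ is one, so $\dim Q = \dim R - \dim N = 2$. By Theorem \ref{thm26}, a basis $\{x,y\}$ of $Q$ can be chosen so that the restrictions $R_x|_N$ and $R_y|_N$ are nil-independent derivations of $N$; but there is only one nil-independent outer derivation, so after a change of basis we may assume $R_y|_N$ is an inner derivation of $N$, hence (adjusting $y$ by an element of $N$) we may take $R_y|_N = 0$, i.e. $[N,y] = 0$, while $R_x|_N$ is (modulo inner derivations) the distinguished non-nilpotent outer derivation coming from Proposition \ref{pr31}.

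Next I would pin down the action of $R_x$ on $N$ explicitly. From the matrix form in Proposition \ref{pr31} specialized to $\alpha_3=\dots=\alpha_{n-1}=0$, $\alpha_n = \theta = 1$ (so the constraints become $a_0\cdot 0 = 0$, $a_1(\alpha_n-\theta)=a_{n-1}-b_{n-1}=0$, and the higher relations force the relevant $a_k$'s to vanish), the diagonal of $R_x|_N$ is $(a_0,\,a_0+a_1,\,2a_0+a_1,\,\dots,\,na_0+a_1)$. Since $R_x|_N$ must be non-nilpotent, $(a_0,a_1)\neq(0,0)$; a careful scaling of $x$ and use of the remaining freedom (adding inner derivations, i.e. elements of $N$, and rescaling) should normalize $(a_0,a_1)$ to one of a short list of canonical choices, and in fact the key point to extract is the value $[e_0,x]=a_0 e_0 + \dots$ together with the eigenvalue pattern on $e_1,\dots,e_n$. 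Then I would write the unknown products $[x,e_i]$, $[x,x]$, $[x,y]$, $[y,x]$, $[y,y]$ as general linear combinations of basis vectors, subject to: $[Q,Q]\subseteq N$, the fact that $[x,x]$, $[y,y]$, and $[x,y]+[y,x]$ lie in $Ann_r(R)$, and solvability.

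The contradiction is then forced by systematically imposing the Leibniz identity on triples drawn from $\{x,y,e_0,e_1,\dots,e_n\}$. The most informative triples are of the form $[[x,e_i],x]$, $[[e_i,x],e_j]$, and those involving $[x,e_0]=e_2$ (recall $[e_0,e_0]=e_2$, $[e_i,e_0]=e_{i+1}$ in $N$) against $x$, which propagate constraints up the chain $e_2\to e_3\to\dots\to e_n$ and interact with the special relation $[e_0,e_1]=e_n$ that distinguishes this algebra. I expect these to force the diagonal entries of $R_x|_N$ into an incompatible pattern — essentially, the constraint linking $a_0$ and $a_1$ through the $[e_0,e_1]=e_n$ row (the condition $a_1(\alpha_n-\theta)=a_{n-1}-b_{n-1}$ becoming $0=a_{n-1}-b_{n-1}$, combined with the eigenvalue $a_0+a_1$ on $e_1$ versus $na_0+a_1$ on $e_n$) cannot be met by any non-nilpotent derivation once the outer derivation is required to close into a genuine solvable algebra. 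The main obstacle is organizational rather than conceptual: there are many unknown structure constants, and the bookkeeping of which Leibniz triples to check, in what order, so that each new identity pins down the next batch of constants, is delicate; the trick is to exploit the filiform grading (weights $i$ on $e_i$) so that most products are forced to be zero on weight grounds, leaving only a handful of scalar equations that turn out to be inconsistent.
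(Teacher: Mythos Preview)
There are two concrete errors in your setup that prevent the argument from going through as written.

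\textbf{Dimension miscount.} The nilradical $N=F_1(0,\dots,0,1)$ has basis $\{e_0,e_1,\dots,e_n\}$, so $\dim N=n+1$, not $n$. Hence for an $(n+2)$-dimensional solvable algebra the complement $Q$ is \emph{one}-dimensional: there is a single element $x$, and no $y$. All of the bookkeeping you anticipate involving $[x,y]$, $[y,x]$, $[y,y]$, $R_y|_N$ is unnecessary.

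\textbf{Misreading of the parameters.} In the notation $F_1(\alpha_3,\dots,\alpha_n,\theta)$, the algebra $F_1(0,\dots,0,1)$ has $\alpha_3=\cdots=\alpha_n=0$ and $\theta=1$; in particular $\alpha_n=0$, not $\alpha_n=1$. This matters because the first relation in Proposition~\ref{pr31}, $a_0(\theta-\alpha_n)=0$, becomes $a_0\cdot 1=0$, forcing $a_0=0$ outright. (You read it as $a_0\cdot 0=0$, which would leave $a_0$ free.) With $a_0=0$, the diagonal of $R_x|_N$ is $(0,a_1,a_1,\dots,a_1)$, so non-nilpotency forces $a_1\neq 0$.

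Once these two points are in place, the contradiction in the paper is short and direct: one observes $\langle e_2,\dots,e_n\rangle\subseteq Ann_r(\ll)$, so $[x,e_2]=0$; on the other hand, after normalizing $[x,e_0]$ and using $[e_0,[e_0,x]+[x,e_0]]=0$, $[e_0,[e_1,x]+[x,e_1]]=0$ to pin down $\beta_0,\beta_1,\gamma_0,\gamma_1$, one computes $[x,e_2]=[x,[e_1,e_0]]$ via the Leibniz identity and finds a leading term $-a_1 e_2$. Hence $a_1=0$, contradicting non-nilpotency. Your outline gestures at the right mechanism (propagation along the filiform chain and interaction with $[e_0,e_1]=e_n$), but because you believed both $a_0$ and $a_1$ were free you never isolated the single scalar equation $a_1=0$ that actually does the work.
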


\begin{proof}
Let $\ll$ be a solvable Leibniz algebra satisfying the condition of the proposition. We complement the basis  $\{e_0,e_1,\dots,e_n\}$ of nilradical $F_1(0,0,\dots,0,1)$ by a basis element $x$ of $Q$.

From the table of multiplication of $F_1(0,0,\dots,0,1)$ we conclude that $<e_2,e_3,\dots, e_n>\subseteq Ann_r(\ll).$
Using Proposition \ref{pr31}, we derive the following products in the algebra $\ll:$
$$
\begin{array}{lll}
[e_0,e_0]=e_2,&\\{}
[e_i,e_0]=e_{i+1},& 1\leq i\leq n-1,&\\{}
[e_0,e_1]=e_n,& &\\{}
[e_0,x]=\d\sum_{i=1}^n a_i e_i,& &[x,e_0]=\d\sum_{i=0}^n\beta_i e_i,\\{}
[e_1,x]=\d\sum_{i=1}^{n-2}a_i e_i+(a_{n-1}+a_1)e_{n-1}+b_n e_n,& & [x,e_1]=\d\sum_{i=0}^n\gamma_i e_i\\{}
[e_i,x]=\d\sum_{k=i}^{n}a_{k-i+1} e_k,&2\leq i\leq n,&[x,x]=\d\sum_{i=0}^n\delta_i e_i.
\end{array}
$$

Taking the change as follows:
$$x'=x-\d\sum_{i=2}^{n-1} \beta_{i+1}e_i,$$
we can assume $[x,e_0]=\beta_0 e_0+\beta_1 e_1+\beta_2 e_2.$

The equalities
$$0=[e_0,[e_0,x]+[x,e_0]]=[e_0,[x,x]]=[e_0,[e_1,x]+[x,e_1]]$$ imply
$$\beta_0=0, \ \beta_1=-a_1, \ \delta_0=\delta_1=0, \ \gamma_0=\gamma_1=0.$$

Considering
$$0=[x,e_2]=-a_1e_2+\sum_{i=3}^{n}\gamma_i e_i,$$
we obtain $a_1=0.$ If we substitute $a_1=0$ in the relations of Proposition \ref{pr31}, we have $a_0=0$. Consequently,
the restriction of the operator $R_x$ to nilradical $F_1(0,0,\dots,0,1)$ is a nilpotent derivation. Therefore, we get a contradiction with the existence of any solvable Leibniz algebra with nilradical $F_1(0,0,\dots,0,1).$
\end{proof}

{\bf Case $F_1^s(\alpha_3,\alpha_4,\dots,\alpha_{n-1},\alpha_n,\alpha_n)$.}

\

Let us fix the first non zero parameter $\alpha_s\neq 0$ of the algebra $F_1^s(\alpha_3,\alpha_4,\dots,\alpha_{n-1},\alpha_n,\alpha_n)$. Then, from the relations of Proposition \ref{pr31}, we deduce $a_1=(s-2)a_0$ and $b_{n-1}=a_{n-1}.$ Therefore, the number of nil-independent outer derivations of nilradical $F_1^s(\alpha_3,\alpha_4,\dots,\alpha_{n-1},\alpha_n,\alpha_n)$ is equal to one.

\begin{prop}
There are not any $(n+2)$-dimensional solvable Leibniz algebras with nilradical $F_1^s(\alpha_3,\alpha_4,\dots,\alpha_{n-1},\alpha_n,\alpha_n)$.
 \end{prop}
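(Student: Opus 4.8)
The plan is to mimic, almost verbatim, the argument used in the previous proposition for the case $F_1(0,0,\dots,0,1)$, which goes by assuming a solvable extension exists, writing out the most general admissible bracket table, normalising the free parameters by a basis change of the $Q$-element $x$, and then deriving a contradiction by showing that the restriction of $R_x$ to the nilradical is forced to be nilpotent. First I would record that, since the number of nil-independent outer derivations of $F_1^s(\alpha_3,\dots,\alpha_{n-1},\alpha_n,\alpha_n)$ is one, $\dim Q=1$ and we may write $R=N\oplus\langle x\rangle$. As before, the table of multiplication of $F_1^s$ shows $\langle e_2,e_3,\dots,e_n\rangle\subseteq Ann_r(L)$, so the products $[e_i,x]$ for $i\geq 1$ are governed entirely by the derivation matrix of Proposition \ref{pr31} with the specialisation $a_1=(s-2)a_0$, $b_{n-1}=a_{n-1}$, while $[e_0,x]$, $[x,e_0]$, $[x,e_1]$, $[x,x]$ are a priori arbitrary, subject only to the Leibniz identity and to $[x,N]\subseteq N$ (the latter because $R^2\subseteq N$).

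Next I would normalise. The shift $x'=x-\sum_{i=2}^{n-1}\beta_{i+1}e_i$ (exactly as in the previous proof) kills all but the top three coordinates of $[x,e_0]$, so we may assume $[x,e_0]=\beta_0 e_0+\beta_1 e_1+\beta_2 e_2$. Then I would exploit the family of identities
$$0=[e_0,[e_0,x]+[x,e_0]]=[e_0,[x,x]]=[e_0,[e_1,x]+[x,e_1]],$$
which, using $[e_0,e_0]=e_2$ and $[e_0,e_i]=0$ for $i\ge 1$ together with $[x,x],[e_1,x]+[x,e_1]\in Ann_r(L)$-related constraints, should pin down $\beta_0=0$ and express $\beta_1$ in terms of $a_1$ (presumably $\beta_1=-a_1$), and force the $e_0$- and $e_1$-components of $[x,x]$ and $[x,e_1]$ to vanish. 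Finally, evaluating $[x,e_2]$ — which must be zero since $e_2\in Ann_r(L)$ — against the expression coming from $R_x$ acting on $e_2$ yields a relation forcing $a_1=0$; feeding $a_1=0$ back into the relations $\alpha_k(a_1-(k-2)a_0)=\tfrac{k}{2}a_1\sum_{j=4}^{k}\alpha_{j-1}\alpha_{k-j+3}$ of Proposition \ref{pr31}, and using that $\alpha_s\neq 0$ with $a_1=(s-2)a_0$, forces $a_0=0$ as well. Hence $R_x|_N$ is a nilpotent derivation, contradicting the nil-independence requirement of Theorem \ref{thm26}, so no such algebra exists.

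The main obstacle I anticipate is the bookkeeping in the step where one extracts $a_1=0$: unlike the clean case $F_1(0,0,\dots,0,1)$, here the nonzero structure constants $\alpha_k$ enter the derivation matrix (the entries $a_j+m a_1\alpha_{\ast}$), so the equation $[x,e_2]=0$ will involve not just $-a_1 e_2$ but a tail of terms mixing $a_1$ with the $\alpha_k$'s; one must check that the coefficient of $e_s$ (or of the first relevant basis vector) still isolates $a_1$ cleanly, or else iterate down the chain $[x,e_2],[x,e_3],\dots$ The identification of which specific Jacobi/Leibniz relations are needed to rule out a nonzero nilpotent-looking but actually non-nilpotent contribution is the delicate point; everything else is the same pattern of annihilator arguments and a single $x$-shift already exhibited in the preceding proposition. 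I would also double-check that the specialisation $a_1=(s-2)a_0$, $b_{n-1}=a_{n-1}$ is exactly what Proposition \ref{pr31} gives once $\alpha_s$ is the first nonzero parameter, since the contradiction hinges on $a_1$ and $a_0$ being proportional.
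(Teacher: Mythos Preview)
Your overall strategy coincides with the paper's: restrict to $\dim Q=1$, write the general bracket table using Proposition~\ref{pr31}, shift $x$ to shorten $[x,e_0]$, read off relations from $[e_0,[e_0,x]+[x,e_0]]=[e_0,[e_1,x]+[x,e_1]]=0$, and then obtain a contradiction from $[x,e_2]$. However, several of the concrete values you import verbatim from the $F_1(0,\dots,0,1)$ case are wrong here, and if you actually use them your contradiction evaporates. In $F_1(0,\dots,0,1)$ the constraint $a_0(\theta-\alpha_n)=0$ forces $a_0=0$, which is why $\beta_0=0$ and $\gamma_1=0$ there. In $F_1^s$ one has $\theta=\alpha_n$, so that constraint is vacuous; in fact $a_0\neq 0$ is required for $R_x$ to be non-nilpotent (since $a_1=(s-2)a_0$). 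The identities you cite then give $\beta_0=-a_0$, $\beta_1=-a_1=-(s-2)a_0$, $\gamma_0=0$, $\gamma_1=-(a_0+a_1)=-(s-1)a_0$, \emph{not} $\beta_0=0$ and not ``the $e_1$-component of $[x,e_1]$ vanishes''. (Also, $[e_0,e_1]=\sum_{k\ge s}\alpha_k e_k\neq 0$, contrary to what you wrote.)

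With the correct values the final step is actually cleaner than you fear: computing $[x,e_2]=[x,[e_1,e_0]]=[[x,e_1],e_0]-[[x,e_0],e_1]$, the $e_2$-coefficient is exactly $\gamma_1=-(s-1)a_0$, because every product $[e_i,e_1]$ in $F_1^s$ lands in $\langle e_3,\dots,e_n\rangle$; the $\alpha_k$'s only pollute the tail, never the $e_2$-term. Hence $[x,e_2]=0$ forces $a_0=0$, so $a_1=0$, contradiction. This is precisely what the paper does, after first normalising $a_0=1$: it finds $\beta_0=-1$, $\gamma_1=-(s-1)$, and then $0=[x,e_2]=-(s-1)e_2+\sum_{k\ge 3}\Delta_k e_k$ is the contradiction since $s\ge 3$. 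So there is no need to ``iterate down the chain $[x,e_2],[x,e_3],\dots$''; one equation suffices.
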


\begin{proof}
Let $\ll$ be a solvable Leibniz algebra with nilradical $F_1^s(\alpha_3,\alpha_4,\dots,\alpha_{n-1},\alpha_n,\alpha_n)$.
Since in the general form of a non-nilpotent derivation of nilradical $F_1^s(\alpha_3,\alpha_4,\dots,\alpha_{n-1},\alpha_n,\alpha_n)$ the parameter $a_0\neq 0$ (otherwise due to equality  $a_1=(s-2)a_0$ a derivation is nilpotent), without loss of generality, one can assume $a_0=1$.

Since for a basis element of the space $Q$ the general form of the derivation $R_x$ is presented in Proposition \ref{pr31}, we have the following multiplications:
$$\begin{array}{lll}
[e_0,e_0]=e_{2},& [e_i,e_{0}]=e_{i+1}, & 1\leq i\leq n-1,\\{}
[e_0,e_1]=\d\sum_{k=3}^{n}\alpha_k e_k,&
[e_i,e_{1}]=\d\sum_{k=i+2}^n \alpha_{k+1-i}e_k,& 1\leq i\leq n-2,\\{}
[e_0,x]=e_0+(s-2)e_1+\d\sum_{i=2}^n a_i e_i,&  [x,e_0]=\d\sum_{i=0}^{n}\beta_i e_i,&\\{}
[e_1,x]=(s-1)e_1+\d\sum_{i=2}^{n-1} a_i e_i+b_n e_n,&  [x,e_1]=\d\sum_{i=0}^n \gamma_i e_i,&
\end{array}$$
$$\begin{array}{ll}
[e_i,x]=(s-2+i) e_i+\d\sum_{j=i+1}^n (a_{j+1-i}+(i-1)(s-2)\alpha_{j-i+2}) e_j, & \quad \quad 2\leq i\leq n,\\{}
[x,x]=\d\sum_{i=0}^n \delta_i e_i.&
\end{array}$$

Evidently, $<e_2, e_3,\dots,e_n>\subseteq Ann_r(\ll)$, consequently, $[x,e_i]=0$ with $2\leq i\leq n$.

The equality  $[e_0,[x,x]]=0$ implies $\delta_0=\delta_1=0.$

Making the following change of basis:
$$x'=x-\d\sum_{i=2}^{n-1}\beta_{i+1}e_i$$
we obtain
$$\begin{array}{ll}
[x',e_0]=\beta_0 e_0+\beta_1 e_1+\beta_2 e_2,&[e_0,x']=[e_0,x]=e_0+(s-2)e_1+\d\sum_{i=2}^n a_i e_i,\\{}
[e_1,x']=[e_1,x]=(s-1)e_1+\d\sum_{i=2}^{n-1} a_i e_i+b_n e_n,&[x',e_1]=\d\sum_{i=0}^n \gamma'_i e_i,\\{}
[x',x']=\d\sum_{i=0}^n \delta'_i e_i.&
\end{array}$$

From the equalities
$$[e_0,[e_0,x]+[x,e_0]]=[e_0,[e_1,x]+[x,e_1]]=0$$ we conclude $$\beta_0=-1, \ \beta_1=-s+2, \  \gamma_0=0, \ \gamma_1=-(s-1).$$

A contradiction obtained from $0=[x,e_2]=[x,[e_1,e_0]]=-(s-1)e_2+\d\sum_{k=3}^n \Delta_k e_k$ with $s\geq 3$ completes the proof of the proposition.
\end{proof}


\

\subsection{Solvable Leibniz algebras with nilradical a non-characteristically nilpotent algebra of the family $F_2(\beta_3,\beta_4,\dots,\beta_n,\gamma)$.}

\

\

In this subsection we consider  the family of algebras $F_2(\beta_3,\beta_4,
\dots, \beta_n, \gamma )$. Similar to the above subsection, firstly we
describe the derivations of such algebras.

\begin{prop}\cite{abror}\label{pr34} Any derivation of a filiform Leibniz
algebra of the family $F_2(\beta_3,\beta_4, \dots, \beta_n, \gamma)$ has the following matrix form:
\[\begin{pmatrix}
a_0&a_1&a_2&a_3&\dots&a_{n-2}&a_{n-1}&a_n\\
0&b_1&0&0&\dots&0&-a_1\gamma & b_n\\
 0& 0& 2a_0&a_2+a_1\beta_3&\dots&a_{n-3}+a_1\beta_{n-2}&
a_{n-2}+a_1\beta_{n-1}&a_{n-1}+a_1\beta_n\\
 0& 0& 0& 3a_0& \dots& a_{n-4}+2a_1\beta_{n-3}& a_{n-3}+2a_1\beta_{n-2}&a_{n-2}+2a_1\beta_{n-1}\\
\vdots& \vdots& \vdots& \vdots&\ddots& \vdots&\vdots &\vdots \\
0& 0& 0& 0&\dots& (n-2)a_0&a_2+(n-3)a_1\beta_3 & a_3+(n-3)a_1\beta_4\\
 0& 0& 0& 0&\dots& 0&(n-1)a_0 & a_2+(n-2)a_1\beta_3\\
0& 0& 0& 0&\dots& 0& 0& na_0
\end{pmatrix},\]
where
$$\begin{array}{ll}
\gamma (2b_1 - na_0)=0, & \qquad  \qquad \beta_3(b_1-2a_0)=0, \notag \\
\beta_k(b_1-(k-1)a_0)& =\frac k 2 a_1\sum\limits_{j=4}^{k}\beta_{j-1}\beta_{k-j+3}, \qquad 4 \leq k \leq n-1, \\
\beta_n(b_1-(n-1)a_0) & = - a_1\gamma + \frac n 2 a_1\sum\limits_{j=4}^{n}\beta_{j-1}\beta_{n-j+3}.\ \notag
\end{array}$$
\end{prop}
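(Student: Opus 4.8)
The plan is to describe all derivations of $N:=F_2(\beta_3,\dots,\beta_n,\gamma)$ by the standard ``undetermined coefficients on the generators'' method, bearing in mind that the multiplication table of $F_2$ also contains the relation $[e_1,e_1]=\gamma e_n$ (this is what the parameter $\gamma$ records). Since $e_2=[e_0,e_0]$ and $e_{k+1}=[e_k,e_0]$ for $2\le k\le n-1$, the algebra is generated by $e_0$ and $e_1$, so a derivation $d$ is uniquely determined by $d(e_0)=\sum_{i=0}^{n}a_ie_i$ and $d(e_1)=\sum_{i=0}^{n}c_ie_i$; I would then compute $d$ on the rest of the basis and impose the derivation identity $d[x,y]=[dx,y]+[x,dy]$ on every pair of basis vectors, reading off the linear conditions on the coefficients this produces.

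First I would prove, by induction on $k$, the closed form
\[
d(e_k)=ka_0e_k+\sum_{m=k+1}^{n}\bigl(a_{m-k+1}+(k-1)a_1\beta_{m-k+2}\bigr)e_m,\qquad 2\le k\le n,
\]
starting from $d(e_2)=[d(e_0),e_0]+[e_0,d(e_0)]$ and using $d(e_{k+1})=[d(e_k),e_0]+[e_k,d(e_0)]$ in the inductive step, together with the facts that $[e_j,e_m]=0$ whenever $m\ge 2$, that $[e_0,e_1]=\sum_{k=3}^{n}\beta_ke_k$, and that $[e_j,e_1]=\sum_{k=j+2}^{n}\beta_{k+1-j}e_k$ for $j\ge 2$. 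This already gives rows $2,\dots,n$ of the matrix, and in particular $d(e_n)=na_0e_n$. Next, applying $d$ to $[e_1,e_0]=0$ and using $[e_1,e_m]=0$ for $m\ge 2$ and $[e_1,e_1]=\gamma e_n$ yields $0=c_0e_2+\sum_{i=2}^{n-1}c_ie_{i+1}+a_1\gamma e_n$, which forces $c_0=c_2=\dots=c_{n-2}=0$ and $c_{n-1}=-a_1\gamma$; writing $b_1:=c_1$ and $b_n:=c_n$, this is row $1$.

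The core of the proof is then the relation $[e_0,e_1]=\sum_{k=3}^{n}\beta_ke_k$. Inserting the closed form for the $d(e_k)$ on the left, expanding $[d(e_0),e_1]+[e_0,d(e_1)]$ on the right, and equating the coefficient of each $e_m$, the parts that are linear in $a_2,\dots,a_n$ cancel after the index change $k\mapsto m-k+1$, leaving $\beta_m\bigl(b_1-(m-1)a_0\bigr)=a_1\sum_{k=3}^{m-1}(k-1)\beta_k\beta_{m-k+2}$ for $3\le m\le n-1$, and the same identity with an extra summand $-a_1\gamma$ on the right when $m=n$; similarly, applying $d$ to $[e_1,e_1]=\gamma e_n$ gives $\gamma(2b_1-na_0)=0$. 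To bring the quadratic relations to the form in the statement I would substitute $j=k+1$ and symmetrize: the involution $j\mapsto m-j+4$ permutes $\{4,\dots,m\}$, fixes each product $\beta_{j-1}\beta_{m-j+3}$, and carries the weight $j-2$ to $m-j+2$, so $2\sum_{j=4}^{m}(j-2)\beta_{j-1}\beta_{m-j+3}=m\sum_{j=4}^{m}\beta_{j-1}\beta_{m-j+3}$; this turns the relation for $4\le m\le n-1$ into $\beta_m(b_1-(m-1)a_0)=\frac m2 a_1\sum_{j=4}^{m}\beta_{j-1}\beta_{m-j+3}$, the case $m=n$ into the last relation of the proposition, and the (empty-sum) case $m=3$ into $\beta_3(b_1-2a_0)=0$.

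Finally I would check that imposing the derivation identity on the remaining pairs contributes no further condition: for $(e_0,e_m)$ and $(e_j,e_m)$ with $m\ge 2$ all three terms vanish, since $[\,\cdot\,,e_m]=0$ for $m\ge 2$ and $d(e_m)\in\langle e_m,\dots,e_n\rangle$, while for $(e_i,e_1)$ with $2\le i\le n-2$ one uses the Leibniz identity $[e_i,e_1]=[[e_{i-1},e_1],e_0]+[e_{i-1},[e_0,e_1]]$ inside $N$ to reduce, by induction on $i$, compatibility with this relation to compatibility with relations already handled. Collecting $d(e_0),\dots,d(e_n)$ into the matrix whose $i$-th row lists the coordinates of $d(e_i)$ then gives the claim. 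I expect the real work to be concentrated in two places: the inductive derivation of the closed form for $d(e_k)$, and — above all — the coefficient comparison coming from $[e_0,e_1]$ together with the symmetrization identity, which is what forces the quadratic relations among the $\beta_k$ into the shape stated in the proposition.
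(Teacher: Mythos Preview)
The paper does not give its own proof of this proposition; it is quoted verbatim from the cited reference \cite{abror} and used as a black box. There is therefore nothing in the present paper to compare your argument against.

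That said, your outline is correct and is exactly the natural approach: determine $d$ on the generators $e_0,e_1$, propagate to $e_2,\dots,e_n$ via $e_2=[e_0,e_0]$ and $e_{k+1}=[e_k,e_0]$ to obtain the closed form for rows $2,\dots,n$, use $[e_1,e_0]=0$ to pin down row~$1$, and then read off the constraints from $[e_1,e_1]=\gamma e_n$ and $[e_0,e_1]=\sum\beta_k e_k$. Your symmetrization under $j\mapsto m-j+4$ to turn $\sum_{k=3}^{m-1}(k-1)\beta_k\beta_{m-k+2}$ into $\tfrac{m}{2}\sum_{j=4}^{m}\beta_{j-1}\beta_{m-j+3}$ is the correct (and only nontrivial) step needed to match the stated form of the quadratic relations, and your inductive reduction of the $(e_i,e_1)$ checks to the $(e_0,e_1)$ relation via the Leibniz identity is the right way to close the verification.
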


{\bf Case $F_{2}(0,0,\dots,0,0,1)$ and $n$-odd.}

\

From the relations of Proposition \ref{pr34}, it follows $b_1=\d\frac{n}{2}a_0$ and $a_1=0$. Therefore, the number of nil-independent outer derivations of the algebra $F_{2}(0,0,\dots,0,0,1)$ is equal to one. According to Theorem \ref{thm26}, we conclude that any solvable Leibniz algebra whose nilradical is $F_{2}(0,0,\dots,0,0,1)$ has dimension $n+2.$

\begin{thm} \label{thm35} Any solvable $(n+2)$-dimensional (the case of odd $n$) Leibniz algebra with nilradical $F_{2}(0,0,\dots,0,0,1)$ is isomorphic to the following algebra:
$$\ll_1: \left\{\begin{array}{lllll}
[e_0,e_0]=e_2,     &                 &[e_1,e_{1}]=e_n,           &[e_0,x]=e_0,             &\\[1mm]
[e_i,e_0]=e_{i+1}, & 2\leq i\leq n-1,&[x,e_1]=-\d\frac{n}{2} e_1,&[e_1,x]=\d\frac{n}{2}e_1.&
\end{array}\right.$$
\end{thm}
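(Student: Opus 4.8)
The plan is to follow the now-standard Mubarakzjanov-type bookkeeping for solvable extensions: write $\ll = N \oplus \langle x\rangle$ with $N = F_2(0,\dots,0,1)$ ($n$ odd), and recover the bracket by combining the known action of $R_x$ on $N$ (a non-nilpotent derivation, so $a_0 \neq 0$, normalized to $a_0 = 1$) with the restrictions imposed by the Leibniz identity. First I would record the structure of $N$: $[e_0,e_0]=e_2$, $[e_i,e_0]=e_{i+1}$ for $2\le i\le n-1$, $[e_0,e_1]=e_n$, $[e_i,e_1]=0$ for $i\ge 2$ (since all $\beta_k=0$ except $\beta_n$), and $[e_1,e_1]=e_n$ — and observe that $\langle e_2,e_3,\dots,e_n\rangle \subseteq Ann_r(\ll)$, so that $[x,e_i]=0$ for $2\le i\le n$ and the only unknown left brackets are $[e_0,x]$, $[e_1,x]$, $[x,e_0]$, $[x,e_1]$, $[x,x]$. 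Using Proposition \ref{pr34} with $b_1=\frac n2 a_0=\frac n2$, $a_1=0$, I would write $[e_0,x]=e_0+\sum_{i=2}^n a_ie_i$, $[e_1,x]=\frac n2 e_1+b_ne_n$, $[e_i,x]=ie_i+\cdots$ for $i\ge 2$, and leave $[x,e_0]$, $[x,e_1]$, $[x,x]$ as general vectors.

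The next step is the routine-but-essential cascade of Leibniz-identity relations. I would apply the identity to triples involving $e_0$ on the left — e.g. $[e_0,[x,x]]=0$, $[e_0,[e_0,x]+[x,e_0]]=0$, $[e_0,[e_1,x]+[x,e_1]]=0$ (using that $[x,y]+[y,x]\in Ann_r$ together with $e_0\notin Ann_r$ acting by $R_{e_0}$ which is injective on the relevant spread) — to kill the $e_0,e_1$ components of $[x,x]$, $[x,e_0]$, $[x,e_1]$ and pin down the remaining ones in terms of the derivation data; this should force $[x,e_0]$ into $\langle e_0,e_1,e_2\rangle$ and, after the change of basis $x' = x - \sum_{i=2}^{n-1}\beta_{i+1}e_i$ analogous to the one used in the previous propositions, reduce $[x,e_0]$ to $\beta_0 e_0 + \beta_1 e_1 + \beta_2 e_2$ and similarly shorten $[x,e_1]$. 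Then I would feed in the "new" products of $N$ created by $x$, e.g. $[x,e_2]=[x,[e_2,e_0]]$ (for $n\ge 5$; for $n=3$ a direct check), $[x,e_n]=0$ versus $[x,[e_1,e_1]]$, and so on, to extract $\beta_0,\beta_1,\beta_2$, $\gamma_0,\gamma_1,\dots$, $b_n$, and the $a_i$. I expect this forces $[x,e_0]=0$ (i.e. $R_{e_0}$ applied to $x$ on the other side vanishes), $[x,e_1]=-\frac n2 e_1$, $[e_1,x]=\frac n2 e_1$, $[x,x]=0$, and — crucially — all $a_i=0$ for $i\ge 2$ so that $[e_0,x]=e_0$ and $[e_i,x]=ie_i$ for $i\ge 1$.

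After that I would verify consistency: the derivation $R_x$ restricted to $N$ must act as $\mathrm{diag}(1,\frac n2,2,3,\dots,n)$ on $e_0,e_1,e_2,\dots,e_n$ (up to the off-diagonal terms that the calculation has just set to zero), and one checks this is genuinely a derivation of $N$ compatible with $[e_1,e_1]=e_n$ (eigenvalue $n = 2\cdot\frac n2$, consistent) and with $[e_0,e_0]=e_2$ (eigenvalue $2 = 2\cdot 1$, consistent) and $[e_0,e_1]=e_n$ (eigenvalue $n = 1 + \frac n2$? — this needs $n = 1+\frac n2$, i.e. $n=2$, which is false!). So in fact the relation $[e_0,e_1]=e_n$ together with the eigenvalue constraint is one of the places the computation must be handled carefully — either $[e_0,e_1]$ contributes a further constraint that is already absorbed, or the labeling of which bracket survives differs; I would track this through Proposition \ref{pr34} exactly rather than by eigenvalue heuristics. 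Finally I would present the resulting algebra, confirm it satisfies the Leibniz identity in full, and note that no free parameters survive, so the extension is unique up to isomorphism, giving $\ll_1$.

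The main obstacle I anticipate is precisely this bookkeeping of which components of $[x,e_0]$, $[x,e_1]$, $[x,x]$ and which $a_i$ survive: the interplay between $Ann_r(\ll)$, the non-injectivity of various $R_{e_j}$, and the basis change $x'=x-\sum\beta_{i+1}e_i$ must be done in the right order so that each Leibniz relation is used to eliminate exactly one unknown. The "creation" relations like $[x,e_k]=[x,[e_{k-1},e_0]]=[[x,e_0],e_{k-1}]+[x,[e_{k-1},e_0]]$ — more precisely $[x,e_k]$ obtained from expanding $[[x,e_0],e_{k-1}]$ via Leibniz — are where the coefficients $a_i$ get forced to zero; getting the induction on $k$ clean is the crux. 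Everything else is the standard normalization argument already illustrated in the two preceding propositions.
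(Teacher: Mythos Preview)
Your overall Mubarakzjanov-style plan is right, but three concrete errors would derail the computation as written.

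First, you misread the nilradical: in $F_2(\beta_3,\dots,\beta_n,\gamma)$ the last slot is $\gamma$, so $F_2(0,\dots,0,1)$ has $\gamma=1$ and \emph{all} $\beta_k=0$, including $\beta_n$. Hence $[e_0,e_1]=0$ (and $[e_i,e_1]=0$ for $i\ge 2$), while $[e_1,e_1]=e_n$. Your ``eigenvalue paradox'' with $[e_0,e_1]=e_n$ is exactly the symptom of this misreading; once $[e_0,e_1]=0$ there is no constraint from that bracket and the diagonal action $e_0\mapsto e_0$, $e_1\mapsto\frac n2 e_1$, $e_i\mapsto ie_i$ ($i\ge 2$) is a genuine derivation.

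Second, $[x,e_0]$ does not come out $0$: the annihilator relation $[e_0,[e_0,x]+[x,e_0]]=0$ forces $\mu_0=-1$ (with $\mu_1=0$), and subsequently $\mu_2=0$, so $[x,e_0]=-e_0$. (The paper's display of $\ll_1$ omits this product, as well as $[e_i,x]=ie_i$ for $i\ge 2$, but the proof derives them and they appear in the parallel algebras $\ll_2$, $\ll_3^{j_0}$.)

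Third, and most importantly for the mechanics: the off-diagonal entries $a_2,\dots,a_n$ in $[e_0,x]$ are \emph{not} forced to zero by Leibniz identities. They are free parameters of the non-nilpotent derivation and must be removed by a change of the \emph{nilradical} basis, namely
\[
e_0'=e_0+\sum_{i=2}^{n}A_ie_i,\qquad e_i'=e_i+\sum_{k=i+1}^{n}A_{k-i+1}e_k\ (2\le i\le n),
\]
with $A_2=-a_2$ and $A_i=\frac{1}{1-i}\bigl(a_i+\sum_{j=2}^{i-1}A_ja_{i-j+1}\bigr)$ for $i\ge 3$. Your ``creation relations'' $[x,e_k]=0$ constrain the $\mu_i$ and $\gamma_i$, not the $a_i$. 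After this basis change, the remaining normalizations are as in the paper: shift $x$ by $\sum\mu_{i+1}e_i$ to kill $\mu_3,\dots,\mu_n$, replace $e_1$ by $e_1-\frac{2}{n}b_ne_n$ to kill $b_n$, use the annihilator identities to get $\delta_0=\delta_1=\mu_1=\gamma_0=0$, $\mu_0=-1$, $\gamma_1=-\frac n2$, then the triples $\{x,e_0,e_1\}$, $\{x,x,e_0\}$, $\{x,x,e_1\}$ give $\gamma_i=0$ ($2\le i\le n$), $\delta_i=0$ ($2\le i\le n-1$), $\mu_2=0$, and finally $x\mapsto x-\frac{\delta_n}{n}e_n$ kills $\delta_n$.
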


\begin{proof} Using Proposition \ref{pr34}, we have the products:
$$\begin{array}{lllll}
[e_0,e_0]=e_2,                      &                 &[e_1,e_{1}]=e_n,                    &[e_0,x]=a_0 e_0+\d\sum_{i=2}^n a_i e_i,      &     \\[1mm]
[e_i,e_0]=e_{i+1},                  & 2\leq i\leq n-1,&[x,e_1]=\d\sum_{i=0}^n \gamma_i e_i,&[e_1,x]=\frac{n}{2}a_0e_1+b_n e_n,           &      \\[1mm]
[x,e_0]=\d\sum_{i=0}^{n}\mu_i e_i,  &                  &                                   &[e_i,x]=ie_i+\d\sum_{j=i+1}^n a_{j+1-i} e_j, & 2\leq i\leq n,\\[1mm]
                                    &                   &                                  &[x,x]=\d\sum_{i=0}^n \delta_i e_i.&
\end{array}$$

Without loss of generality we can suppose $a_0=1$. It is easy to see that $<e_2,e_3,\dots,e_n>\subseteq Ann_r(\ll)$. Hence, $[x,e_i]=0$ for $2\leq i\leq n$.

Let us take the following transformation of basis:
$$e'_0=e_0+\d\sum_{i=2}^{n}A_i e_i,\quad e'_1=e_1,\quad e'_i=e_i+\d\sum_{k=i+1}^{n}A_{k-i+1} e_k,\ 2\leq i\leq n,\ x'=x$$
with $A_2=-a_2,\qquad A_i=\d\frac{1}{1-i}\left( a_i+\d\sum_{j=2}^{i-1}A_j a_{i-j+1}\right),\ \ 3\leq i\leq n.$
Then we obtain
$$\begin{array}{ll}
[x',e'_0]&=[x,e_0]=\d\sum_{i=0}^{n}\mu_i e_i,\\[3mm]
[e'_0,x']&=e_0+\d\sum_{i=2}^n a_ie_i+\d\sum_{i=2}^n A_i \left(ie_i+\d\sum_{j=i+1}^n a_{j+1-i} e_j\right)=e_0+\d\sum_{i=2}^n a_ie_i+A_2(2e_2+\d\sum_{j=3}^n a_{j-1} e_j)+\\[1mm]
&A_3(3e_3+\d\sum_{j=4}^n a_{j-2} e_j)+\cdots+A_{n-2}((n-2)e_{n-2}+\d\sum_{j=n-1}^n a_{j-n+3} e_j)+A_{n-1}((n-1)e_{n-1}+a_2 e_n)+\\[1mm]
&+A_n (n e_n)=e_0+(a_2+2A_2)e_2+\d\sum_{i=3}^n (i A_i+a_i+\d\sum_{j=2}^{i-1}A_j a_{i-j+1})e_i=e_0+\d\sum_{i=2}^{n} A_i e_i=e'_0.
\end{array}$$
Thus, we can assume $a_i=0$ for $2\leq i\leq n$.

Now, making the change $x'=x-\d\sum_{i=2}^{n-1}\mu_{i+1} e_i$ we obtain the family:

$$\begin{array}{lllll}
[e_0,e_0]=e_2,                          &                 &[e_1,e_{1}]=e_n,                     &[e_0,x]=e_0, &  \\[1mm]
[e_i,e_0]=e_{i+1},                      & 2\leq i\leq n-1,&[x,e_1]=\d\sum_{i=0}^n \gamma_i e_i, &[e_1,x]=\frac{n}{2}e_1+b_n e_n,&\\[1mm]
[x,e_0]=\mu_0 e_0+\mu_1 e_1+\mu_2 e_2,  &                   &                                   & [e_i,x]=ie_i, & 2\leq i\leq n,\\[1mm]
                                        &                   &                                   &[x,x]=\d\sum_{i=0}^n \delta_i e_i.&
\end{array}$$

By setting $e'_1=e_1-\frac{2}{n}b_n e_n$ we get $b'_n=0.$

The equalities
$$0=[e_0,[x,x]]=[e_1,[x,x]]=[e_0,[e_0,x]+[x,e_0]]=[e_1,[e_0,x]+[x,e_0]]=[e_1,[e_1,x]+[x,e_1]]$$
derive $$\delta_0=\delta_1=\mu_1=\gamma_0=0, \quad \mu_0=-1, \quad \gamma_1=-\frac{n}{2}.$$

Applying Leibniz identity for the triples $\{x,e_0,e_1\},$ $\{x,x,e_0\}$ and $\{x,x,e_1\},$
we conclude $$\gamma_i=0, \ \ 2\leq i\leq n,\quad \delta_i=0,\ \ 2\leq i\leq n-1, \quad \mu_2=0.$$

Finally, putting $x'=x-\d\frac{\delta_n}{n} e_n,$ we obtain the family of algebras $\ll_1.$
\end{proof}

{\bf Case $F_2(0,0,\dots,0,\beta_{\frac{n+2}{2}},0,\dots,0,0,1)$ and $n$-even.}

\

Similarly, we get $a_0=1, \ b_1=\frac{n}{2}, \ a_1=0$ and that any solvable Leibniz algebra whose nilradical is $F_2^1(0,0,\dots,0,\beta_{\frac{n+2}{2}},0,\dots,0,0,1)$ has dimension $n+2.$

\begin{thm}\label{thm36}
Any solvable Leibniz algebra with nilradical $F_{2}^1(0,0,\dots,0,\beta_{\frac{n+2}{2}},0,\dots,0,0,1)$ (the case of even $n$) is isomorphic to an algebra of the following family of algebras:
$$\ll_{2}^{\beta_{\frac{n+2}{2}}}: \left\{\small
\begin{array}{llllll}
[e_0,e_0]=e_{2},& &[e_0,e_1]=\beta_{\frac{n+2}{2}} e_{\frac{n+2}{2}},& &[e_0,x]=e_0,&\\[1mm]
[e_i,e_{0}]=e_{i+1}, & 2\leq i\leq n-1,&[e_1,e_{1}]=e_n,& &[e_1,x]=\frac{n}{2}e_1,&\\[1mm]
[x,e_0]=-e_0,& &[e_i,e_1]=\beta_{\frac{n+2}{2}}e_{\frac{n+2i}{2}},&2\leq i\leq \frac{n}{2},&[e_i,x]=i e_i, & 2\leq i\leq n.\\[1mm]
& &[x,e_1]=-\frac{n}{2} e_1-\beta_{\frac{n+2}{2}} e_{\frac{n}{2}},& & &
\end{array}\right.$$
\end{thm}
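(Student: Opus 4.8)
The plan is to follow the same template used in Theorem~\ref{thm35}, adapted to the presence of the extra nonzero structure constant $\beta_{\frac{n+2}{2}}$. First I would write down the general multiplication table of $\ll$ using Proposition~\ref{pr34} with $a_0=1$, $b_1=\frac n2$, $a_1=0$: this gives $[e_0,x]$, $[e_i,x]$ in terms of parameters $a_2,\dots,a_n$, the products $[x,e_0]=\sum\mu_ie_i$, $[x,e_1]=\sum\gamma_ie_i$, $[x,x]=\sum\delta_ie_i$, while the nilradical part is fixed by the case $F_2^1(0,\dots,0,\beta_{\frac{n+2}{2}},0,\dots,0,1)$. Since $\langle e_2,\dots,e_n\rangle\subseteq Ann_r(\ll)$, I immediately get $[x,e_i]=0$ for $2\le i\le n$. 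The next step is the normalizing change of basis $e_0'=e_0+\sum_{i\ge 2}A_ie_i$ (with suitable $A_i$ chosen recursively so that $[e_0',x']=e_0'$), exactly as in the proof of Theorem~\ref{thm35}, to kill the $a_i$; one must check that these changes do not disturb the other brackets, in particular the product $[e_0,e_1]=\beta_{\frac{n+2}{2}}e_{\frac{n+2}{2}}$ and the induced $[e_i,e_1]=\beta_{\frac{n+2}{2}}e_{\frac{n+2i}{2}}$.

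After that I would apply a further shift $x'=x-\sum_{i=2}^{n-1}\mu_{i+1}e_i$ to reduce $[x,e_0]$ to $\mu_0e_0+\mu_1e_1+\mu_2e_2$, and a shift in $e_1$ to remove the $e_n$-coefficient $b_n$ from $[e_1,x]$. The core of the argument is then to run the Leibniz identity on a sufficient list of triples. Evaluating $[e_0,[x,x]]$, $[e_1,[x,x]]$, $[e_0,[e_0,x]+[x,e_0]]$, $[e_1,[e_0,x]+[x,e_0]]$, and $[e_1,[e_1,x]+[x,e_1]]$ (all of which vanish since the inner bracket lies in $Ann_r$ or is a sum of an inner-derivation term that annihilates $e_0,e_1$ appropriately) should force $\delta_0=\delta_1=\mu_1=\gamma_0=0$, $\mu_0=-1$, and $\gamma_1=-\frac n2$. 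The new feature compared with Theorem~\ref{thm35} is that the triple $\{x,e_0,e_1\}$ now produces a term involving $\beta_{\frac{n+2}{2}}$: from $[[x,e_0],e_1]=[[x,e_1],e_0]+[x,[e_0,e_1]]$ one reads off the coefficient $-\beta_{\frac{n+2}{2}}e_{\frac n2}$ appearing in $[x,e_1]$, which is precisely the term displayed in $\ll_2^{\beta_{\frac{n+2}{2}}}$. The remaining triples $\{x,x,e_0\}$, $\{x,x,e_1\}$, and $\{x,e_i,e_0\}$, $\{x,e_1,e_0\}$ should then pin down $\gamma_i=0$ for $2\le i\le n$, $\delta_i=0$ for $2\le i\le n-1$, and $\mu_2=0$, and finally a shift $x'=x-\frac{\delta_n}{n}e_n$ kills $\delta_n$.

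The step I expect to be the main obstacle is the bookkeeping in the chain of Leibniz-identity evaluations that involve $\beta_{\frac{n+2}{2}}$: because $[e_0,e_1]$ and the $[e_i,e_1]$ are now nonzero and land in specific basis vectors indexed by $\frac{n+2i}{2}$, one must carefully track which identities remain homogeneous and which acquire $\beta_{\frac{n+2}{2}}$-correction terms, and verify that these corrections are exactly consistent (no spurious constraint forcing $\beta_{\frac{n+2}{2}}=0$, since this parameter is genuinely free). In particular one should double-check that the constraint $\gamma_i=0$ for $i$ in the range near $\frac n2$ survives even though $[x,e_1]$ legitimately has a nonzero $e_{\frac n2}$-component — i.e. that the displayed value $-\frac n2 e_1-\beta_{\frac{n+2}{2}}e_{\frac n2}$ is forced and is the only deviation from the $F_{2}(0,\dots,0,1)$ case. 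A secondary routine-but-delicate point is confirming that the normalizing change of basis for $e_0$ can be carried out without interfering with the already-fixed coefficient of $e_1$ in $[e_0,x]$ (which is $0$ here since $a_1=0$), so the argument structure of Theorem~\ref{thm35} transfers cleanly. Once all constraints are collected, reading off the surviving nonzero brackets yields exactly the family $\ll_2^{\beta_{\frac{n+2}{2}}}$, and a final check that distinct values of $\beta_{\frac{n+2}{2}}$ give non-isomorphic algebras (or, if they don't, a note on the normalization) completes the proof.
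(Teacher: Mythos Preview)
Your proposal is correct and follows essentially the same route as the paper: start from Proposition~\ref{pr34} with $a_0=1$, $b_1=\frac n2$, $a_1=0$; perform the same basis changes (the triangular change on $e_0,e_2,\dots,e_n$ to kill the $a_i$, the shift of $x$ to reduce $[x,e_0]$, the shift of $e_1$ to kill $b_n$); then run the same list of Leibniz identities, with the only novelty being that the triple $\{x,e_0,e_1\}$ now forces $\gamma_{n/2}=-\beta_{\frac{n+2}{2}}$ rather than $0$. Two small remarks: your line ``$\gamma_i=0$ for $2\le i\le n$'' should read $2\le i\le n$, $i\neq \frac n2$ (you clearly know this from the surrounding discussion, but state it cleanly when you write it up); and the final isomorphism-class check you propose is not part of the paper's proof --- the theorem only asserts membership in the family $\ll_2^{\beta_{\frac{n+2}{2}}}$, not that distinct parameters give non-isomorphic algebras.
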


\begin{proof}
Using the previous arguments and Proposition \ref{pr34}, we obtain
the multiplications:
$$\small\begin{array}{llllll}
[e_0,e_0]=e_{2},                    &                  &[e_0,e_1]=\beta_{\frac{n+2}{2}} e_{\frac{n+2}{2}},&                        &[e_0,x]=e_0+\d\sum_{i=2}^n a_i e_i, &\\[1mm]
[e_i,e_{0}]=e_{i+1},                & 2\leq i\leq n-1,&[e_1,e_{1}]=e_n,                                    &                       &[e_1,x]=\frac{n}{2}e_1+b_n e_n,&\\[1mm]
[x,e_0]=\d\sum_{i=0}^{n}\mu_i e_i, &                  &[e_i,e_1]=\beta_{\frac{n+2}{2}}e_{\frac{n+2i}{2}},&2\leq i\leq \frac{n}{2},&[e_i,x]=i e_i+\d\sum_{j=i+1}^n a_{j+1-i} e_j, &2\leq  i\leq n,\\[1mm]
                                    &                   &[x,e_1]=\d\sum_{i=0}^n \gamma_i e_i,       &                               &[x,x]=\d\sum_{i=0}^n \delta_i e_i.&
\end{array}$$

Taking the following transformation of basis:
$$e'_0=e_0+\d\sum_{i=2}^{n}A_i e_i,\quad e'_1=e_1,\quad e'_i=e_i+\d\sum_{k=i+1}^{n}A_{k-i+1} e_k,\ 2\leq i\leq n,\quad x'=x$$
with $$A_2=-a_2,\quad A_i=\d\frac{1}{1-i}\left( a_i+\d\sum_{j=2}^{i-1}A_j a_{i-j+1}\right), \ 3\leq i\leq n,$$
we can assume that $a_i=0$ for $2\leq i\leq n.$

By setting $$x'=x-\d\sum_{i=2}^{n-1}\mu_{i+1} e_i, \quad e'_1=e_1-\frac{2}{n}b_n e_n$$ we reduce the above multiplication to the following one:
$$\small\begin{array}{lll}
[e_0,e_0]=e_{2},                             &[e_0,e_1]=\beta_{\frac{n+2}{2}} e_{\frac{n+2}{2}},                        &[e_0,x]=e_0, \\[1mm]
[e_i,e_{0}]=e_{i+1},   \quad 2\leq i\leq n-1,&[e_1,e_{1}]=e_n,                                                          &[e_1,x]=\frac{n}{2}e_1,\\[1mm]
[x,e_0]=\mu_0 e_0+\mu_1 e_1+\mu_2 e_2,       &[e_i,e_1]=\beta_{\frac{n+2}{2}}e_{\frac{n+2i}{2}},\ 2\leq i\leq \frac{n}{2},&[e_i,x]=i e_i,\ \ 2\leq i\leq n,\\[1mm]
                                             &[x,e_1]=\d\sum_{i=0}^n \gamma_i e_i,                                     &[x,x]=\d\sum_{i=0}^n \delta_i e_i.
\end{array}$$

From the equalities
$$[e_1,[x,x]]=[e_0,[x,x]]=[e_1,[e_0,x]+[x,e_0]]=
[e_0,[e_0,x]+[x,e_0]]=$$ $$=[e_1,[e_1,x]+[x,e_1]]=[e_0,[e_1,x]+[x,e_1]]=0$$
we derive
$$\delta_1=\delta_0=\mu_1=\gamma_0=0, \quad \mu_0=-1, \quad \gamma_1=-\frac{n}{2}$$

Applying the Leibniz identity for the triples $\{x,e_0,x\}, \ \{x,e_0,e_1\}$ and $\{x,x,e_1\},$ we obtain  $\mu_2=0,$ $\delta_i=0$ with $2\leq i\leq n-1,$
$\gamma_i=0$ with $2\leq i\leq n-1,$ $i\neq\frac{n}{2}$ and $\gamma_{\frac{n}{2}}=-\beta_{\frac{n+2}{2}}.$

The following change: $x'=x-\frac{\delta_n}{n}e_n$ deduce $\delta'_n=0.$  \end{proof}

{\bf Case $F_2^j(0,0,\dots,0,\underbrace{1}_j,0,\dots,0,0,0),$  $n\geq 4$ and $3\leq j\leq n$.}

\

Let us fix values $j$ and $j_0,$ $3\leq j, j_0\leq n$ such that $\beta_{j_0}$ is the first non zero parameter. Similar to the above cases and using Proposition \ref{pr34}, we derive
$$d(e_0)=\d\sum_{i=0}^n a_i e_i,\qquad d(e_1)=b_1e_1+b_ne_n$$
and using the restrictions on derivations we have:
$$\beta_{2j_0-2}(b_1-(2j_0-3)a_0)=0=\frac{2j_0-2}{2}a_1(\beta_{j_0}^2)\Rightarrow a_1=0,$$
$$2j_0-2\leq n\Rightarrow j_0\leq \lfloor\frac{n+2}{2}\rfloor$$
and $b_1=(j-1)a_0.$

Thus,
$$\left\{\begin{array}{ll}
a_1=0& \mbox{ if } 3\leq j\leq \lfloor\frac{n+2}{2}\rfloor\\
a_1& \mbox{ in other case}
\end{array}\right.$$

Similar to the previous cases, we can suppose $a_0=1$ and $b_1=(j_0-1).$ Therefore, the number of nil-independent outer derivations of the algebras
$F_2^{j_0}(0,0,\dots,0,\underbrace{1}_{j_0},0,\dots,0,0,0)$ is equal to one. We use $F_2^{j_0}$ to denote $F_2^{j_0}(0,0,\dots,0,\underbrace{1}_{j_0},0,\dots,0,0,0).$

\begin{thm}
Any solvable $(n+2)$-dimensional Leibniz algebra with nilradical $F_{2}^{j_0}$ with $3\leq j_0\leq n$ is isomorphic to the following algebra:

$$\ll_3^{j_0}: \left\{ \small\begin{array}{lll}
[e_0,e_0]=e_{2},                       &[e_0,e_1]=e_{j_0},                                &[e_0,x]=e_0,          \\[1mm]
[e_i,e_{0}]=e_{i+1}, \quad 2\leq i\leq n-1, &[e_i,e_1]=e_{j_0+i-1}, \quad  2\leq i\leq n-1-j_0,&[e_1,x]=(j_0-1)e_1,   \\[1mm]
[x,e_0]=-e_0,                      &[x,e_1]=-(j_0-1)e_1-e_{j_0-1},                    &[e_i,x]=i e_i, \quad 2\leq i\leq n.
\end{array}\right. $$
\end{thm}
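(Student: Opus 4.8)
The plan is to follow the same strategy that proves Theorems~\ref{thm35} and~\ref{thm36}, pushing through the bookkeeping for the family $F_2^{j_0}$. First I would start from the general multiplication table: by Proposition~\ref{pr34} the restriction of $R_x$ to the nilradical has the matrix shape recorded there, so with the normalization $a_0=1$, $b_1=j_0-1$, $a_1=0$ we obtain $[e_0,x]=e_0+\sum_{i=2}^n a_i e_i$, $[e_1,x]=(j_0-1)e_1+b_n e_n$, $[e_i,x]=ie_i+\sum_{j=i+1}^n(a_{j+1-i})e_j$ for $2\le i\le n$, together with the unknown left products $[x,e_0]=\sum\mu_i e_i$, $[x,e_1]=\sum\gamma_i e_i$, $[x,x]=\sum\delta_i e_i$. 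Since $F_2^{j_0}$ has $\langle e_2,\dots,e_n\rangle\subseteq Ann_r(\ll)$ (the span of $e_2,\dots,e_n$ equals the square of the nilradical, which kills everything from the right here because no generator product lands outside it asymmetrically), we immediately get $[x,e_i]=0$ for $2\le i\le n$.

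Next I would perform the same two normalizing changes of basis as in the previous proofs. The substitution $e_0'=e_0+\sum_{i=2}^n A_ie_i$, $e_i'=e_i+\sum_{k>i}A_{k-i+1}e_k$ with $A_2=-a_2$ and $A_i=\frac{1}{1-i}(a_i+\sum_{j=2}^{i-1}A_ja_{i-j+1})$ absorbs the tail of $[e_0,x]$, so we may assume $a_i=0$ for all $i\ge 2$; this is exactly the telescoping computation carried out explicitly in the proof of Theorem~\ref{thm35}, and it goes through verbatim because the relevant structure constants of $F_2^{j_0}$ involving $e_0$ are the same as for $F_2(0,\dots,0,1)$. Then $x'=x-\sum_{i=2}^{n-1}\mu_{i+1}e_i$ cuts $[x,e_0]$ down to $\mu_0e_0+\mu_1e_1+\mu_2e_2$, and $e_1'=e_1-\frac{1}{j_0-1}b_ne_n$ (valid since $j_0\ge 3$) kills $b_n$.

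After this reduction I would extract the remaining constants from the Leibniz identity. The string of equalities $[e_0,[x,x]]=[e_1,[x,x]]=[e_0,[e_0,x]+[x,e_0]]=[e_1,[e_0,x]+[x,e_0]]=[e_0,[e_1,x]+[x,e_1]]=[e_1,[e_1,x]+[x,e_1]]=0$ should force $\delta_0=\delta_1=\mu_1=\gamma_0=0$, $\mu_0=-1$, $\gamma_1=-(j_0-1)$, just as in the two theorems already proved. Applying the Leibniz identity to the triples $\{x,e_0,e_1\}$, $\{x,e_0,x\}$ and $\{x,x,e_1\}$ should then pin down $\mu_2=0$, $\delta_i=0$ for $2\le i\le n-1$, and $\gamma_i=0$ for $2\le i\le n-1$ except $\gamma_{j_0-1}=-1$ (this last value comes from comparing $[x,[e_0,e_1]]=[x,e_{j_0}]=0$ against $[[x,e_0],e_1]+[e_0,[x,e_1]]$, where the $e_0$-derivation shifts indices by $j_0-1$). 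A final shift $x'=x-\frac{\delta_n}{n}e_n$ removes $\delta_n$. Collecting everything yields exactly the table of $\ll_3^{j_0}$, with the products $[e_i,e_1]=e_{j_0+i-1}$ for $2\le i\le n-1-j_0$ inherited directly from the nilradical $F_2^{j_0}$.

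The main obstacle I anticipate is purely organizational rather than conceptual: one must track the index shift $i\mapsto i+(j_0-1)$ induced by right multiplication by $x$ (eigenvalue $i$ on $e_i$) together with the shift $i\mapsto i+j_0-1$ coming from $[-,e_1]$ on $F_2^{j_0}$, and make sure the various Leibniz-identity relations are consistent for every admissible $j_0$ in the range $3\le j_0\le n$ and in both parity cases for $n$ — in particular checking that no extra parameter survives and that the claimed isomorphism type does not depend on further normalization. Since $F_2^{j_0}$ is the simplest member of the family (only one nonzero $\beta$), the nilradical relations are sparse, so I expect all cross-terms to vanish cleanly and the argument to close without case splitting beyond what appears in the proofs of Theorems~\ref{thm35} and~\ref{thm36}.
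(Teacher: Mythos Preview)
Your proposal follows exactly the paper's approach---its proof is literally the single sentence ``The proof is carried out by applying arguments used in Theorems~\ref{thm35} and~\ref{thm36}''---and your sketch fleshes out precisely those steps. Two minor bookkeeping corrections: the scalar in the change killing $b_n$ should be $\tfrac{1}{n-j_0+1}$ (the difference of the $R_x$-eigenvalues on $e_n$ and $e_1$), and for $j_0>\lfloor\tfrac{n+2}{2}\rfloor$ the constraints of Proposition~\ref{pr34} do \emph{not} force $a_1=0$, so an extra preliminary substitution $e_0\mapsto e_0-\tfrac{a_1}{j_0-2}e_1$ (with the induced shift on $e_i$, $i\ge 2$) is needed before the rest of your argument runs.
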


\begin{proof} The proof is carried out by applying arguments used in Theorems \ref{thm35} and \ref{thm36}.
\end{proof}

\

\subsection{Solvable Leibniz algebras with nilradical a non-characteristically nilpotent algebra of the family $F_3(\theta_1,\theta_2,\theta_3)$.}

\

\

Let $\ll$ be a filiform Leibniz algebra from the family $F_3(\theta_1,\theta_2,\theta_3)$ and let $\{e_0,e_1,\dots,e_n\}$ be a basis. The following proposition describes
the derivations of such algebras.

\begin{prop}\label{pr38}\cite{abror} A derivation $d$ of a filiform Leibniz algebra of the family $F_3(\theta_1,\theta_2,\theta_3)$ have the following form:
$$\begin{array}{l}
d(e_0)=\sum\limits_{i=0}^na_ie_i, \quad d(e_1)=\sum\limits_{i=1}^nb_ie_i,\\
d(e_i)=((i-1)a_0+b_1)e_i+\sum\limits_{j=i+1}^{n-1}b_{j-i+1}e_j+(b_{n-i+1}+(-1)^{i-1}\alpha a_{n-i+1})e_n,\\
d(e_n)=((n-1)a_0+b_1+\alpha a_1)e_n
\end{array}$$
with the following restrictions:
$$\begin{array}{l}
\theta_1((n-3)a_0+b_1)=a_1\theta_2,\\
2a_1\theta_3=(n-2)a_0\theta_2,\\
\theta_3((n-1)a_0-b_1)=0.
\end{array}$$
\end{prop}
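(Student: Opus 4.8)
The last stated result in the excerpt is Proposition~\ref{pr38}, the description of the derivations of a filiform Leibniz algebra in the family $F_3(\theta_1,\theta_2,\theta_3)$. Since the excerpt ends with the \emph{statement} of this proposition (quoted from \cite{abror}), I outline how I would prove it directly.

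\smallskip

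\textbf{Plan.} The proof is a direct computation with the derivation identity $d([x,y])=[d(x),y]+[x,d(y)]$ applied to generating pairs of the algebra. First I would write $d(e_0)=\sum_{i=0}^n a_i e_i$ and $d(e_1)=\sum_{i=0}^n b_i e_i$, with a priori a coefficient $b_0$ as well, and then exploit the fact that $[x,e_0]$ generates $e_2,\dots,e_n$: since $[e_i,e_0]=e_{i+1}$ for $1\le i\le n-1$, applying $d$ to $[e_i,e_0]=e_{i+1}$ gives a recursion expressing $d(e_{i+1})$ in terms of $d(e_i)$, $d(e_0)$ and the (known) products $[e_i,e_0]$, $[e_{i+1}\text{-relevant brackets}]$. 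Working this recursion down from $i=1$ yields the asserted closed form for $d(e_i)$, $2\le i\le n-1$, and for $d(e_n)$, with the shift pattern $d(e_i)=((i-1)a_0+b_1)e_i+\sum_{j>i} b_{j-i+1}e_j+(\cdots)e_n$; the extra term $(-1)^{i-1}\alpha a_{n-i+1}e_n$ enters precisely from the skew brackets $[e_i,e_{n-i}]=\alpha(-1)^i e_n$ when they are hit by $d$. The coefficient of $e_n$ in $d(e_n)$, namely $(n-1)a_0+b_1+\alpha a_1$, comes out by pushing the recursion one further step and using $[e_1,e_{n-1}]=\alpha(-1)e_n$ together with $[e_{n-1},e_0]=e_n$.

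\smallskip

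Next I would extract the constraint equations. These arise from the ``exceptional'' products of $F_3$ that are not covered by the generic recursion: $[e_0,e_0]=\theta_1 e_n$, $[e_0,e_1]=-e_2+\theta_2 e_n$, $[e_1,e_1]=\theta_3 e_n$, and $[e_0,e_i]=-e_{i+1}$ for $2\le i\le n-1$. Applying $d$ to $[e_0,e_0]=\theta_1 e_n$ and reading off the $e_n$-component gives $2\theta_1\cdot(\text{coeff of }e_0\text{ in }d(e_0)) = \theta_1\cdot(\text{coeff of }e_n\text{ in }d(e_n))$, which after substituting the formula for $d(e_n)$ is exactly $\theta_1((n-3)a_0+b_1)=a_1\theta_2$ once the $[e_0,e_1]$ relation is also used (the $\theta_2$ term appears because $d(e_0)$ has an $e_1$-component $a_1$ and $[a_1 e_1,e_0]$ and $[e_0,a_1 e_1]$ feed into $e_2$ and, via $\theta_2$, into $e_n$). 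Similarly, $d([e_0,e_1])=d(-e_2+\theta_2 e_n)$ compared on the $e_n$-slot yields $2a_1\theta_3=(n-2)a_0\theta_2$ after using $d(e_1)$'s formula and the $[e_1,e_1]=\theta_3 e_n$ relation; and $d([e_1,e_1])=\theta_3\,d(e_n)$ on the $e_n$-slot gives $\theta_3((n-1)a_0-b_1)=0$. Along the way one checks that $b_0=0$ (forced by $d$ mapping into the algebra consistently with $[e_1,e_1]\in\langle e_n\rangle$ and the annihilator structure) and that the off-diagonal entries are genuinely free, i.e.\ no further relations are imposed — this is done by verifying the derivation identity on all remaining triples, which reduces to identities already guaranteed by the Leibniz identity holding in $F_3$ itself.

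\smallskip

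\textbf{Main obstacle.} The bookkeeping of the $e_n$-components is the delicate part: $e_n$ lies in the right annihilator and is hit from many directions (from $[e_i,e_{n-i}]$, from $[e_0,e_{n-1}]$, from $\theta_1,\theta_2,\theta_3$), so the three constraint equations only emerge after carefully collecting \emph{all} contributions to the $e_n$-coefficient in each instance of the derivation identity. The recursion for $d(e_i)$ in the ``bulk'' ($2\le i\le n-1$) is routine, but isolating the correction term $(-1)^{i-1}\alpha a_{n-i+1}e_n$ and confirming the sign requires tracking the alternating signs in $[e_i,e_{n-i}]=\alpha(-1)^i e_n$ through the recursion; that is where I would be most careful. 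Once the $e_n$-accounting is done correctly, matching the three restriction identities is immediate.
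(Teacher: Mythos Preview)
The paper does not prove Proposition~\ref{pr38}; it is quoted verbatim from \cite{abror} with no argument supplied, so there is no proof in the paper to compare yours against. Your outline is nonetheless the standard and correct route: write $d(e_0)=\sum a_ie_i$ and $d(e_1)=\sum b_ie_i$, propagate via $d(e_{i+1})=d([e_i,e_0])=[d(e_i),e_0]+[e_i,d(e_0)]$ to obtain the closed form of $d(e_i)$ for $i\ge 2$ (the alternating correction $(-1)^{i-1}\alpha a_{n-i+1}e_n$ arising from the brackets $[e_i,e_{n-i}]$ exactly as you say), and then read off the three scalar constraints by applying the derivation identity to $[e_0,e_0]=\theta_1 e_n$, $[e_0,e_1]=-e_2+\theta_2 e_n$ and $[e_1,e_1]=\theta_3 e_n$.

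One minor correction: your justification for $b_0=0$ is too vague and not quite right --- it is not an ``annihilator'' phenomenon. The clean way to see it is to apply the derivation identity to a product such as $[e_1,e_2]$: the left side lies in $\langle e_4,\dots,e_n\rangle$, while on the right side the term $b_0[e_0,e_2]=-b_0e_3$ is the only contribution to the $e_3$-coefficient, forcing $b_0=0$. With that fixed, your plan reproduces the cited result.
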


We consider the case of solvable Leibniz algebras with non-Lie filiform nilradical of the family $F_3(\theta_1,\theta_2,\theta_3)$ bellow.

\begin{thm}
 There is not any solvable Leibniz algebra whose nilradical is a non-characteristically filiform non-Lie algebra of the family $F_3(\theta_1,\theta_2,\theta_3).$
\end{thm}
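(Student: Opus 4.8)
The plan is to dispose of the three algebras listed in Theorem~\ref{abror} for this family, namely $F_3(1,0,0)$, $F_3(0,1,0)$ and $F_3(0,0,1)$, one at a time: in each case assume a non-nilpotent solvable Leibniz algebra $\ll$ with that nilradical $N$ exists and derive a contradiction. The first step is to read off from Proposition~\ref{pr38} the shape of a non-nilpotent derivation $d$ of $N$. The restrictions listed there give: for $F_3(1,0,0)$, $b_1=-(n-3)a_0$ with $a_0\neq 0$ (otherwise $d$ is nilpotent); for $F_3(0,1,0)$, $a_0=a_1=0$ and $b_1\neq 0$; for $F_3(0,0,1)$, $a_1=0$, $b_1=(n-1)a_0$ with $a_0\neq 0$. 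In each case the outer derivations modulo nilpotent ones form a one-dimensional space, so by Theorem~\ref{thm26} one has $\dim\ll=n+2$; write $\ll=N\oplus\langle x\rangle$ and rescale $x$ so that the leading parameter of $d:=R_x|_N$ equals $1$.

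Next I would write out the full multiplication table of $\ll$: the products inside $N$ come from Theorem~\ref{thm28}, the products $[e_i,x]=d(e_i)$ come from Proposition~\ref{pr38} with the normalized parameters, and $[x,e_i]=\sum_j\mu^{(i)}_je_j$, $[x,x]=\sum_j\delta_je_j$ are unknown elements of $N$ (recall $\ll^2\subseteq N$). Using that $[x,x]$ and $[x,e_i]+[e_i,x]$ lie in $Ann_r(\ll)$, together with the Leibniz identity on a suitable triple ($\{x,e_0,e_0\}$ for $F_3(1,0,0)$, $\{x,e_1,e_1\}$ for $F_3(0,0,1)$, and $\{x,e_0,e_1\}$ combined with $\{x,e_1,e_0\}$ for $F_3(0,1,0)$, since in each case $e_n$ is realized as $[u,u]$ or $[u,v]+[v,u]$ with $u,v\in\langle e_0,e_1\rangle$), one first shows $e_n\in Ann_r(\ll)$, so that $[\ll,e_n]=0$. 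Then, exactly as in the previous subsections, the substitution $x\mapsto x-\sum_i c_ie_i$ cancels the components of $[x,e_0]$ along $e_2,\dots,e_{n-1}$ — this is possible because $R_{e_0}$ sends $e_i$ to $e_{i+1}$ for $1\le i\le n-1$ — reducing $[x,e_0]$ to an expression supported on $e_0,e_1$ (and possibly $e_n$).

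The core of the proof is then to apply the Leibniz identity to the triples $\{e_0,e_0,x\}$, $\{e_1,e_0,x\}$, $\{e_0,e_1,x\}$, $\{x,e_0,x\}$, $\{x,x,e_0\}$, $\{x,x,e_1\}$ and $\{x,e_0,e_1\}$, and to extract from their coordinates the values of the $\mu^{(i)}_j$ and $\delta_j$; the relations obtained should force the normalized leading parameter to vanish — $a_0=0$, hence also $b_1=0$, for $F_3(1,0,0)$ and $F_3(0,0,1)$, and $b_1=0$ for $F_3(0,1,0)$ — which makes $d=R_x|_N$ nilpotent and contradicts the choice of $x$. Consequently no such $\ll$ exists.

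The one real obstacle is the bookkeeping created by the undetermined brackets $[e_i,e_j]$ with $1\le i<j$ of the $F_3$-family. Since all of these take values in $\langle e_{i+j+1},\dots,e_n\rangle\subseteq N^3$, I would carry out all the Leibniz-identity computations modulo $N^3$, where those structure constants are irrelevant and only the $e_0,e_1,e_2$ coordinates of each identity contribute; this is what keeps the three cases uniform and short. Any auxiliary parameters of $N$ occurring for odd $n$ (such as $\alpha$ in the $F_3(1,0,0)$ case) are eliminated along the way by the same identities — for instance the $e_n$-coordinate of $\{e_0,e_0,x\}$ already forces $\alpha a_1=0$ — so they do not require a genuinely separate argument.
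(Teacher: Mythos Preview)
Your overall strategy---treat $F_3(1,0,0)$, $F_3(0,1,0)$, $F_3(0,0,1)$ separately, read off the form of $R_x|_N$ from Proposition~\ref{pr38}, normalize, and look for an inconsistency among the Leibniz identities---is exactly the paper's. The observation that $e_n\in Ann_r(\ll)$ (because $e_n$ equals $[e_0,e_0]$, $[e_1,e_1]$, or $[e_0,e_1]+[e_1,e_0]$ respectively) is also the right first move and is implicitly what drives the paper's contradiction.

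The genuine gap is your claim that the Leibniz identities, read off \emph{only} in the $e_0,e_1,e_2$ coordinates (i.e.\ modulo $N^3$), force the leading parameter to vanish. They do not. Take $F_3(1,0,0)$ with $a_0=1$, $b_1=3-n$: after your normalizations the quotient $\ll/N^3$ has nilradical the Heisenberg algebra $\langle e_0,e_1,e_2\rangle$, and the products $[e_0,x]=e_0$, $[e_1,x]=(3-n)e_1$, $[e_2,x]=(4-n)e_2$, $[x,e_i]=-[e_i,x]$, $[x,x]=0$ satisfy every Leibniz identity you list. For example $\{x,e_0,e_1\}$ reads $[-e_0,e_1]=[-(3-n)e_1,e_0]+[x,-e_2]$, i.e.\ $e_2=(n-3)e_2+(4-n)e_2$, which is an identity. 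The same happens for $F_3(0,1,0)$ and $F_3(0,0,1)$. So the contradiction does not live in degree $\leq 2$; it lives at $e_n$.

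What the paper actually does (for $F_3(1,0,0)$, the others being analogous) is: first use $\{e_0,e_0,x\}$ and $\{e_2,e_{n-2},x\}$ to force $\alpha=0$ (your remark that $\{e_0,e_0,x\}$ gives $\alpha a_1=0$ is correct but not enough by itself); then change basis so that $a_1=0$ and $b_i=0$ for $i\geq 2$; then compute $[x,e_i]$ \emph{for all $i$} by induction on $i$ via $e_{i}=[e_{i-1},e_0]$, obtaining $[x,e_i]=-(i-n+2)e_i$; finally observe that this gives $[x,e_n]=-2e_n$, whereas $e_n=[e_0,e_0]$ forces $[x,e_n]=0$. That is the contradiction. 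The inductive computation is unavoidable: you must propagate the information all the way up to $e_n$, and the preliminary basis changes are precisely what make the unknown brackets $[e_i,e_j]$ irrelevant to that induction---not the passage to $N/N^3$.
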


\begin{proof} Let us consider firstly the {\bf Case $F_3^1(1,0,0)$.}
Proposition \ref{pr38} leads to $b_1=(3-n)a_0$ and we can suppose $a_0=1$ making the change $x'=\frac{1}{a_0}x.$ Thus, we have the following products:
$$\begin{array}{ll}
[e_i,e_0]=-[e_0,e_i]=e_{i+1}, & 1\leq i\leq n-1,\\{}
[e_0,e_0]=e_n,&\\{}
[e_i,e_{n-i}]=-[e_{n-i},e_i]=(-1)^i\alpha e_n, & 1\leq i\leq n-1,\\{}
[e_0,x]=e_0+\sum\limits_{i=1}^na_ie_i,&\\{}
[e_1,x]=(3-n)e_1+\sum\limits_{i=2}^nb_ie_i,&\\{}
[e_i,x]=((i-1)+b_1)e_i+\sum\limits_{j=i+1}^{n-1}b_{j-i+1}e_j+(b_{n-i+1}+(-1)^{i-1}\alpha a_{n-i+1})e_n,&\\{}
[e_n,x]=((n-1)+b_1+\alpha a_1)e_n,&\\{}
[x,e_0]=\sum\limits_{i=0}^n\beta_ie_i,& \\{}
[x,e_1]=\sum\limits_{i=0}^n\gamma_ie_i,& \\{}
[x,x]=\sum\limits_{i=0}^n\delta_ie_i.&
\end{array}$$

It is easy to check that by the change $x'=x+\sum\limits_{i=1}^{n-1}a_{i+1}e_i$ we can suppose $a_i=0$ with $2\leq i\leq n.$

Applying the Leibniz identity for the elements $\{e_2,e_{n-2},x\}$ and $\{e_0,e_0,x\},$ we get a contradiction with
$\alpha=1.$ Thus, $\alpha=0.$

Taking the following basis transformation:
$$e_0'=e_0+\frac{a_1}{n-2}e_1, \ \ e_i'=e_i, \ 1\leq i\leq n, \ \ x'=x+\frac{a_1}{n-2}\sum\limits_{i=1}^{n-1}b_{i+1}e_i,$$
we can suppose $a_1=0.$

Putting
$$e_0'=e_0, \quad e_i'=e_i+\sum\limits_{j=i+1}^nA_{j-i+1}e_j, \ 1\leq i\leq n,$$
with
$$A_2=-b_2, \ A_i=\frac{1}{1-i}(b_i+\sum\limits_{j=2}^{i-1}A_jb_{i-j+1}), \ 3\leq i\leq n,$$
one can assume $b_i=0$ for $2\leq i\leq n.$

Let us resume the products of the family
$$\begin{array}{lll}
[e_i,e_0]=-[e_0,e_i]=e_{i+1}, & 1\leq i\leq n-1,&\\{}
[e_0,e_0]=e_n,& &\\{}
[e_0,x]=e_0,& &[x,e_0]=\sum\limits_{i=0}^n\beta_ie_i, \\{}
[e_i,x]=(i-n+2)e_i,&1\leq i\leq n,&[x,e_1]=\sum\limits_{i=0}^n\gamma_ie_i,\\{}
[x,x]=\sum\limits_{i=0}^n\delta_ie_i.& &
\end{array}$$

Considering Leibniz identity for the elements of the form $$\{e_0,x,e_0\}, \quad \{e_0,x,e_1\}, \quad \{e_1,x,e_1\}, \quad \{e_1,x,e_0\},$$
we obtain that  $$\beta_0=-1, \ \beta_i=0, \ 1\leq i\leq n-2, \qquad \gamma_0=0, \ \gamma_1=n-3, \ \gamma_i=0, \ 2\leq i\leq n-1.$$

Using the induction method, we get $[x,e_i]=-(i-n+2)e_i,$ with $ 2\leq i\leq n.$

From the equality $0=[x,[e_0,e_0]],$ we have $[x,e_n]=0,$ but this is a contradiction, because $[x,e_n]=-2e_n.$
Therefore, there are not any solvable Leibniz algebras with $F_3^1(1,0,0)$-nilradical.

\

Similar study of {\bf Case $F_3^2(0,1,0)$} and {\bf Case $F_3^2(0,0,1)$} leads to non-existence of solvable Leibniz algebras with nilradicals $F_3^2(0,1,0)$ and $F_3^2(0,0,1).$
\end{proof}

\section{Solvable Leibniz algebras with filiform Lie nilradical}

In this section we study solvable Leibniz algebras whose nilradical is a filiform Lie algebra. Since we consider non-nilpotent solvable Leibniz algebras, it is sufficient to consider non-characteristically nilpotent filiform Lie nilradicals. In this section we restrict ourselves to the study of the families $A_{n+1}^r(\alpha_1,\dots,\alpha_t),$  $B_{n+1}^r(\alpha_1,\dots,\alpha_t)$ of Theorem \ref{yu}, because the other two algebras of Theorem \ref{yu} have been already studied in \cite{SolGFil}.


\

{\bf Case $A_{n+1}^r(\alpha_1,\dots,\alpha_t)$ with $1\leq r\leq n-3$ and $t=\lfloor\frac{n-r-1}{2}\rfloor.$}

\

Below we present some description of the derivations of the family of algebras $A_{n+1}^r(\alpha_1,\dots,\alpha_t).$

\begin{prop}\label{pr41}
Any derivation of a filiform Lie algebra of the family $A_{n+1}^r(\alpha_1,\dots,\alpha_t)$ with $1\leq r\leq n-3$ and $t=\lfloor\frac{n-r-1}{2}\rfloor$
has the form:
$$\begin{array}{ll}
d(e_0)=\sum\limits_{i=0}^na_ie_i,&\\
d(e_1)=(1+r)a_0e_1+\sum\limits_{i=2}^nb_ie_i,&\\
d(e_i)=(i+r)a_0e_i+\sum\limits_{j=i+1}^n(*)e_j,& 2\leq
i\leq n,
\end{array}$$
where $\{e_0,e_1,\dots,e_n\}$ is a basis of the family $A_{n+1}^r.$
\end{prop}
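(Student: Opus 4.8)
The plan is to use two facts: every derivation $d$ of a Leibniz algebra satisfies $d(L^k)\subseteq L^k$, and $L:=A_{n+1}^r$ is generated by $e_0,e_1$ through the relations $[e_0,e_i]=e_{i+1}$ and $[e_i,e_j]=\nu_{ij}e_{i+j+r}$, where $\nu_{ij}=\sum_{k=i}^{t}(-1)^{k-i}\alpha_k\binom{j-k-1}{k-i}$. First I would record that the lower central series is $L^k=\langle e_k,e_{k+1},\dots,e_n\rangle$ for $2\leq k\leq n$; together with $d(L^k)\subseteq L^k$ this forces $d(e_i)\in\langle e_i,\dots,e_n\rangle$ for all $i\geq 2$, so the matrix of $d$ is upper triangular away from its first two columns. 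Write $d(e_0)=\sum_{i=0}^n a_ie_i$, $d(e_1)=\sum_{i=0}^n b_ie_i$ and $d(e_i)=c_{ii}e_i+(\text{terms in }e_{i+1},\dots,e_n)$ for $i\geq 2$; it then suffices to show $c_{ii}=(i+r)a_0$, $b_0=0$ and $b_1=(1+r)a_0$.

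The diagonal entries come from an induction on the relation $d(e_{i+1})=[d(e_0),e_i]+[e_0,d(e_i)]$, $1\leq i\leq n-1$, by reading off the coefficient of $e_{i+1}$ on both sides. In $[d(e_0),e_i]$ every term $a_k[e_k,e_i]$ with $k\geq 1$ lies in $L^{i+1+r}$ (or is zero), hence does not contribute in degree $i+1$, whereas $a_0[e_0,e_i]=a_0e_{i+1}$; in $[e_0,d(e_i)]$ only $c_{ii}[e_0,e_i]=c_{ii}e_{i+1}$ has degree $i+1$. This gives $c_{i+1,i+1}=a_0+c_{ii}$, while the case $i=1$ (where $[e_0,e_0]=0$, so $b_0$ is irrelevant here) gives $c_{22}=a_0+b_1$. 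Hence $c_{ii}=(i-1)a_0+b_1$ for every $i\geq 2$, and the proposition will follow once $b_0=0$ and $b_1=(1+r)a_0$ are proved.

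For $b_0=0$ I would apply $d([e_1,e_j])=[d(e_1),e_j]+[e_1,d(e_j)]$ with $2\leq j\leq n-1$ (possible since $n\geq 4$). The left side lies in $L^{j+2}$ (it is $d$ applied to $0$ or to a multiple of $e_{1+j+r}$), so it has no component on $e_{j+1}$; on the right, the terms $b_1[e_1,e_j]$, $b_m[e_m,e_j]$ with $m\geq2$ and $[e_1,d(e_j)]$ all lie in $L^{j+2}$ as well, so the only contribution in degree $j+1$ is $b_0[e_0,e_j]=b_0e_{j+1}$; therefore $b_0=0$. For $b_1$ I would pick a pair $i<j$ with $\nu_{ij}\neq0$ and compare the coefficient of the top vector $e_{i+j+r}$ in $d([e_i,e_j])=[d(e_i),e_j]+[e_i,d(e_j)]$: the left side gives $\nu_{ij}c_{i+j+r,i+j+r}=\nu_{ij}\bigl((i+j+r-1)a_0+b_1\bigr)$, while on the right only the diagonal parts of $d(e_i)$ and $d(e_j)$ reach that degree, giving $\nu_{ij}(c_{ii}+c_{jj})=\nu_{ij}\bigl((i+j-2)a_0+2b_1\bigr)$. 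Cancelling $\nu_{ij}$ yields $(r+1)a_0=b_1$, and then $c_{ii}=(i-1)a_0+(1+r)a_0=(i+r)a_0$, as required.

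The only genuine obstacle is producing a pair with $\nu_{ij}\neq0$. Here I would invoke the standing hypothesis of Theorem~\ref{yu} that some $\alpha_k\neq0$ and take $s$ to be the largest index with $\alpha_s\neq0$: in $\nu_{s,s+1}$ every summand with $k>s$ drops out (its $\alpha_k$ is zero) and the $k=s$ summand equals $\alpha_s\binom{0}{0}=\alpha_s\neq0$, while the indices $(s,s+1)$ are admissible because $s\geq 1$, $s+1\leq t+1\leq n-2$ and $2s+1+r\leq 2t+1+r\leq n$. Thus $[e_s,e_{s+1}]=\alpha_se_{2s+1+r}\neq0$, and the previous step applies with $(i,j)=(s,s+1)$. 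Everything else is routine degree bookkeeping on the two families of defining relations, and the entries left as $(*)$ in the statement are not constrained by the normal form.
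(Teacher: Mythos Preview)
Your argument is correct and follows essentially the same route as the paper's proof: write $d(e_0)$ and $d(e_1)$ with undetermined coefficients, use the relation $e_{i+1}=[e_0,e_i]$ inductively to obtain $d(e_i)=\bigl((i-1)a_0+b_1\bigr)e_i+\cdots$, force $b_0=0$ from a bracket $[e_1,e_j]$, and then extract $b_1=(r+1)a_0$ from a nonvanishing bracket $[e_s,e_{s+1}]=\alpha_s e_{2s+1+r}$. The paper chooses $j=n-1$ for the $b_0$ step and takes the \emph{smallest} index $s$ with $\alpha_s\neq0$ (splitting into the cases $\alpha_1\neq0$ and $\alpha_1=0$), whereas you take a generic $j$ and the \emph{largest} such $s$; both choices work for the same reason, namely $\nu_{s,s+1}=\alpha_s$. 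Your use of $d(L^k)\subseteq L^k$ to get the upper-triangular shape is a tidy replacement for the paper's bare induction, but otherwise the two proofs coincide.
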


\begin{proof}
Let us denote
$$d(e_0)=\sum\limits_{i=0}^na_ie_i, \quad d(e_1)=\sum\limits_{i=0}^nb_ie_i.$$

Using the induction method and the properties of derivation, we establish
$$d(e_i)=((i-1)a_0+b_1)e_i+\sum\limits_{j=i+1}^n(*)e_j,\quad 2\leq i\leq n.$$

From the equalities
$$0=d([e_1,e_{n-1}])=[d(e_1),e_{n-1}]+[e_1,d(e_{n-1})]=b_0e_n$$
we get $b_0=0.$

\begin{itemize}
\item If $\alpha_1\neq 0,$ then from the equality $d([e_1,e_2])=d(\alpha_1e_{r+3})$
we have $b_1=(r+1)a_0.$

\item If $\alpha_1=0,$ then there exists $i,$ $2\leq i\leq t$ such that $\alpha_i\neq 0.$
The equality $d([e_i,e_{i+1}])=d(\alpha_i e_{2i+1+r})$ implies $b_1=(r+1)a_0.$
\end{itemize}

Thus, we obtain the form of derivation which is asserted in the proposition.
\end{proof}

From Proposition \ref{pr41}, we conclude that the number of nil-independent outer derivations of the algebra $A_{n+1}^r(\alpha_1,\dots,\alpha_t)$ is equal to one. Consequently, according to Theorem \ref{thm26}, any solvable Leibniz algebra whose nilradical is $A_{n+1}^r(\alpha_1,\dots,\alpha_t)$ has dimension $n+2.$

In this section we use similar arguments to those from the above section.

\begin{thm}\label{A}
 Any solvable Leibniz algebra with nilradical $A_{n+1}^r(\alpha_1,\dots,\alpha_t)$ is isomorphic to the following family of Lie algebras:
 $$\small\begin{array}{ll}
[e_0,e_i]&=e_{i+1},\quad 1\leq i\leq n-1,\\{}
[e_i,e_j]&=\left(\sum\limits_{k=i}^t(-1)^{k-i}\alpha_k
\left (\begin{array}{c}
j-k-1\\
k-i
\end{array} \right )\right) e_{i+j+r},
 \quad 1\leq i<j\leq n-2, \ i+j+r\leq n,\\{}
[e_0,x]&=e_0+a_1e_1,\\{}
[e_1,x]&=(1+r)e_1+\sum\limits_{i=2}^nb_ie_i,\\{}
[e_2,x]&=(2+r)e_2+\sum\limits_{i=3}^nb_{i-1}e_i,\\{}
[e_i,x]&=(i+r)e_i+\sum\limits_{j=i+1}^{i-1+r} b_{j-i+1} e_j+\left(b_{1+r}+a_1\left(\sum\limits_{k=2}^{i-1}\left (\sum\limits_{s=1}^t (-1)^{s-1}\alpha_s
\left (\begin{array}{c}
k-s-1\\
s-1
\end{array} \right )\right)
\right) \right)e_{i+r}+\\
&+\sum\limits_{j=i+1+r}^{n} b_{j-i+1} e_j
\ \ \ \mbox{ with }\ 3\leq i\leq n-r,\\{}
[e_i,x]&=(i+r)e_i+\sum\limits_{j=i+1}^{n} b_{j-i+1} e_j,\quad n-r+1\leq i\leq n.
\end{array}$$
\end{thm}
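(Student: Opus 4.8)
The plan is to follow the Mubarakzjanov-style approach used throughout the paper: start from the most general multiplication compatible with the derivation form in Proposition \ref{pr41}, then successively normalize the structure constants by changes of basis and exploit the Leibniz identity on well-chosen triples. First I would write $R_x$ restricted to the nilradical as the general derivation of Proposition \ref{pr41} with $a_0=1$ (possible since a non-nilpotent derivation must have $a_0\neq 0$, and we rescale $x$); this gives $[e_0,x]=e_0+\sum_{i=1}^n a_i e_i$, $[e_1,x]=(1+r)e_1+\sum_{i=2}^n b_i e_i$, and the $[e_i,x]$ for $i\geq 2$ determined inductively from the derivation property. The remaining unknown products are $[x,e_0]=\sum_{i=0}^n\beta_ie_i$, $[x,e_1]=\sum_{i=0}^n\gamma_ie_i$, $[x,e_i]$ for $i\geq 2$, and $[x,x]=\sum_{i=0}^n\delta_ie_i$. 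The first simplification is to kill the coefficients $a_2,\dots,a_n$ in $[e_0,x]$: since $\langle e_2,\dots,e_n\rangle$ lies in the right annihilator of the nilradical (it equals $[N,N]$ for these $A$-algebras, as $e_{i+1}=[e_0,e_i]$ and all brackets land in degree $\geq 2$), a basis change $x' = x - \sum_{i\geq 1} (\text{suitable combinations}) e_i$ — more precisely the triangular recursion $A_2=-a_2$, $A_i=\frac{1}{1-i}(a_i+\sum_{j=2}^{i-1}A_j a_{i-j+1})$ as in Theorem \ref{thm35} — lets me assume $a_i=0$ for $i\geq 2$, leaving only $a_1$.

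Next I would pin down the "$[x,-]$" side. Because $e_2,\dots,e_n\in Ann_r(\ll)$ we immediately get $[x,e_i]=0$ for... no — that is only true if $e_i\in Ann_r(\ll)$ as an ideal of $\ll$, which need not hold a priori; instead I expect $[x,e_i]$ to be forced by applying the Leibniz identity to triples $\{x,e_0,e_{i-1}\}$ and induction, once $[x,e_0]$ and $[x,e_1]$ are known. So the order is: apply Leibniz to $\{e_0,e_0,x\}$, $\{e_0,e_1,x\}$, $\{e_1,e_1,x\}$, $\{e_1,e_0,x\}$ (the four "low-degree" triples, exactly as in the $F_1,F_2,F_3$ cases) to extract $\delta_0=\delta_1=0$, the leading coefficients $\beta_0=-1$, $\beta_1=-a_1$ (from $[e_0,[e_0,x]+[x,e_0]]=0$ since $R_{[e_0,x]+[x,e_0]}$ must annihilate... rather $[e_0,-]$ of it vanishes), $\gamma_0=0$, $\gamma_1=-(1+r)$, and to start propagating constraints on the higher $\beta_i,\gamma_i$. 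Then a further change $x'=x-\sum_{i=2}^{n-1}\beta_{i+1}e_i$ normalizes $[x,e_0]$ down to $\beta_0e_0+\beta_1e_1$ plus a top-degree term, and changes $e_1'=e_1-(\text{multiple of }e_n)$ or $x'=x-\frac{\delta_n}{n}e_n$ clean up the $e_n$-coefficients at the end. The coefficients $b_2,\dots,b_n$ in $[e_1,x]$ are the genuine moduli: a basis change $e_1'=e_1+\sum_{j\geq 2}(*)e_j$ of the triangular type will eliminate some of them, but the Jacobi/Leibniz relations among the $\alpha_k$ (the polynomial relations "emanating from the Jacobi identity" mentioned in Theorem \ref{yu}) will generally obstruct killing all of them, which is why the theorem's answer retains $a_1$ and $b_2,\dots,b_n$ as free parameters subject to those same relations.

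The technical heart — and the main obstacle — is verifying that the resulting algebra is genuinely a \emph{Lie} algebra, i.e. that all the symmetrized brackets $[u,v]+[v,u]$ vanish, equivalently that $Ann_r(\ll)=0$ for the non-nilradical part as well. Concretely I must show $[x,x]=0$ (after the final normalization), $[x,e_0]=-[e_0,x]$ modulo the $e_0$-coefficient bookkeeping, $[x,e_1]=-[e_1,x]$, and $[x,e_i]=-[e_i,x]$ for all $i$. The inductive step $[x,e_{i+1}]=[x,[e_0,e_i]]=[[x,e_0],e_i]+[x_0?]\dots$ — careful: use $[x,[e_0,e_i]] = [[x,e_0],e_i] + [x,[e_0,e_i]]$ is circular; the right identity is Leibniz $[[x,e_0],e_i]=[[x,e_i],e_0]+[x,[e_0,e_i]]$, so $[x,e_{i+1}]=[x,[e_0,e_i]]=[[x,e_0],e_i]-[[x,e_i],e_0]$, which by induction on $i$ expresses everything in terms of already-known quantities and yields $[x,e_{i+1}]=-(i+1+r)e_{i+1}-\sum(\dots)$, matching $-[e_{i+1},x]$. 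The delicate point is that at each stage the cross-brackets $[e_i,e_j]$ with the $\alpha_k$-coefficients feed into these computations, so one must repeatedly invoke the polynomial relations among $\alpha_1,\dots,\alpha_t$ coming from $d([e_i,e_j])=[d e_i,e_j]+[e_i,d e_j]$; I expect the bulk of the write-up to be precisely this bookkeeping, organized as: (i) normalize $[e_0,x]$; (ii) normalize $[x,e_0]$, $[x,e_1]$ via Leibniz on low triples; (iii) propagate $[x,e_i]=-[e_i,x]$ by induction using Leibniz on $\{x,e_0,e_{i-1}\}$; (iv) force $[x,x]=0$ from $[e_0,[x,x]]=[e_1,[x,x]]=0$ plus a final shift; (v) read off that what remains is the displayed Lie algebra, noting that the $a_1,b_i$ are constrained exactly by the $\alpha$-relations so no spurious non-Lie terms survive.
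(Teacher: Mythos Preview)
Your overall strategy is sound and would lead to the result, but it differs from the paper's organization and contains one factual slip worth flagging.

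\textbf{Organization.} The paper's proof of Theorem \ref{A} is much shorter than your plan: it only (a) scales $x$ so that $a_0=1$, (b) replaces $x$ by $x-\sum_{i=1}^{n-1}a_{i+1}e_i$ to kill $a_2,\dots,a_n$, and (c) computes $[e_i,x]$ for $i\geq 2$ inductively from the derivation property. Everything you put in steps (ii)--(v) --- determining $[x,e_0]$, $[x,e_1]$, propagating $[x,e_i]=-[e_i,x]$ by induction, and forcing $[x,x]=0$ --- is not done inside this theorem at all; the paper defers it to the separate Proposition \ref{A.Leibniz}, whose role is precisely to show that no non-Lie solvable Leibniz algebra with this nilradical exists. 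So your proposal is essentially the paper's Theorem \ref{A} and Proposition \ref{A.Leibniz} merged into one argument, which is perfectly legitimate given that the theorem statement already says ``Lie algebras''.

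\textbf{The slip.} Your justification for the first normalization is wrong: you assert that $\langle e_2,\dots,e_n\rangle$ lies in the right annihilator of the nilradical. That was true for the non-Lie families $F_1,F_2$, but it is \emph{false} here, since $A_{n+1}^r$ is a Lie algebra and $[e_0,e_i]=e_{i+1}\neq 0$ for $i\leq n-1$; only $e_n$ is central. Fortunately the conclusion survives: the paper's change $x'=x-\sum_{i=1}^{n-1}a_{i+1}e_i$ works directly because $[e_0,e_i]=e_{i+1}$, so $[e_0,x']=e_0+a_1e_1$ on the nose (the triangular recursion you borrow from Theorem \ref{thm35} is unnecessary). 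The side effect of this change on $[e_1,x]$ and $[e_i,x]$ only perturbs the $b_j$'s and higher coefficients, which are free parameters anyway, so nothing is lost.

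\textbf{Minor point.} Your steps (ii)--(iv) match what the paper does in Proposition \ref{A.Leibniz}: Leibniz on $\{e_0,x,x\}$, $\{e_0,x,e_0\}$, $\{e_1,x,e_0\}$ gives $\delta_i=0$ for $i\leq n-1$, $\beta_0=-1$, $\beta_1=-a_1$, $\beta_i=0$ for $2\leq i\leq n-1$; then $[x,e_i]$ is obtained inductively; then $[e_j,[e_1,x]+[x,e_1]]=0$ and $[x,[e_0,x]+[x,e_0]]=0$ force $\gamma_i=-b_i$ and $\beta_n=0$; finally $[x,[x,x]]=0$ gives $\delta_n=0$. No additional shift of $x$ or of $e_1$ is needed at the end. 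Your worry that the $\alpha$-relations must be ``repeatedly invoked'' is overstated: the paper does not use them explicitly in either proof, only the derivation form from Proposition \ref{pr41} and straight Leibniz computations.
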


\begin{proof}
From Proposition \ref{pr41} we have the following products:
$$\begin{array}{ll}
[e_0,e_i]=e_{i+1},& 1\leq i\leq n-1,\\{}
[e_i,e_j]=\left(\sum\limits_{k=i}^t(-1)^{k-i}\alpha_k \left (\begin{array}{c}
j-k-1\\
k-i
\end{array} \right )\right)e_{i+j+r}, & 1\leq i<j\leq n-2, \ i+j+r\leq n,\\{}
[e_0,x]=\sum\limits_{i=0}^na_ie_i,&\\{}
[e_1,x]=(1+r)a_0e_1+\sum\limits_{i=2}^nb_ie_i,&\\{}
[e_i,x]=(i+r)a_0e_i+\sum\limits_{j=i+1}^n(*)e_j,& 2\leq
i\leq n.
\end{array}$$

Since $a_0\neq0$, by scaling of basis element $x$, we can suppose $a_0=1.$
The change $x'=x-\d\sum\limits_{i=1}^{n-1}a_{i+1}e_i$ admits to suppose $a_i=0$ for $2\leq i\leq n.$

Thus, we have
$$\begin{array}{ll}
[e_0,e_i]=e_{i+1},& 1\leq i\leq n-1,\\{}
[e_i,e_j]=\left(\sum\limits_{k=i}^t(-1)^{k-i}\alpha_k \left (\begin{array}{c}
j-k-1\\
k-i
\end{array} \right )\right)e_{i+j+r}, & 1\leq i<j\leq n-2, \ i+j+r\leq n,\\{}
[e_0,x]=e_0+a_1e_1,&\\{}
[e_1,x]=(1+r)e_1+\sum\limits_{i=2}^nb_ie_i,&\\{}
\end{array}$$

Using Jacobi identity and the induction method, we obtain the expressions of the products $[e_i,x]$ with $2\leq i\leq n,$ which complete the proof of theorem. \end{proof}

\begin{prop}\label{A.Leibniz}
 There are not any solvable non-Lie Leibniz algebras with nilradical $A_{n+1}^r(\alpha_1,\dots,\alpha_t).$
\end{prop}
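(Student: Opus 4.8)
The statement to prove is that no solvable \emph{non-Lie} Leibniz algebra can have nilradical $A_{n+1}^r(\alpha_1,\dots,\alpha_t)$. By Theorem~\ref{thm26} and Proposition~\ref{pr41}, the complement $Q$ is one-dimensional, spanned by some $x$, and the action of $R_x$ on $N=A_{n+1}^r$ is exactly the non-nilpotent derivation described in Proposition~\ref{pr41}. So the whole algebra $\ll=N\oplus Q$ differs from the Lie algebra already constructed in Theorem~\ref{A} only in the ``new'' products that point \emph{into} $Q$ or involve $x$ twice and need not be skew: namely $[x,e_0]$, $[x,e_1]$, $[x,e_i]$ for $i\geq 2$, $[x,x]$, and a possible right-annihilator correction to $[e_0,x]$, $[e_1,x]$ relative to the Lie table. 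The task is to show that the Leibniz identity forces all the genuinely ``non-Lie'' freedom to vanish, so that $\ll$ collapses onto the Lie algebra of Theorem~\ref{A}.

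\textbf{Key steps, in order.} First I would record, as in Theorem~\ref{A}, that after rescaling $x$ we may take $a_0=1$ and after the shift $x'=x-\sum_{i=1}^{n-1}a_{i+1}e_i$ we may take $a_i=0$ for $2\leq i\leq n$, so that $[e_0,x]=e_0+a_1e_1$ and the products $[e_i,x]$ are the ones displayed in Theorem~\ref{A}. Next, since $\langle e_2,\dots,e_n\rangle\subseteq N^2\subseteq Ann_r(\ll)$ (the square of the filiform algebra kills on the right), we get $[x,e_i]=0$ for $2\leq i\leq n$ and $[\,\cdot\,,e_i]$ is already pinned down for those $i$; only $[x,e_0]$, $[x,e_1]$, $[x,x]$ remain free, with expansions $[x,e_0]=\sum\beta_ie_i$, $[x,e_1]=\sum\gamma_ie_i$, $[x,x]=\sum\delta_ie_i$. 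Then I would feed the Leibniz identity systematically to kill these: evaluating $[e_0,[e_0,x]+[x,e_0]]=0$ and $[e_0,[x,x]]=0$ (these must vanish because $x,x$-type and $x,e_0+e_0,x$-type sums land in $Ann_r$) gives $\beta_0=-1$, $\beta_1=-a_1$, $\delta_0=\delta_1=0$; similarly $[e_1,[e_0,x]+[x,e_0]]=0$ and $[e_1,[e_1,x]+[x,e_1]]=0$ give $\gamma_0=0$ and $\gamma_1=-(1+r)$. The one clean contradiction I expect is obtained by computing $[x,e_2]$ two ways: on one hand $e_2\in Ann_r(\ll)$ forces $[x,e_2]=0$; on the other hand $[x,e_2]=[x,[e_0,e_1]]$ (using $[e_0,e_1]=e_2$), and expanding this via the Leibniz identity $[x,[e_0,e_1]]=[[x,e_0],e_1]-[[x,e_1],e_0]$ and the relations just obtained produces a nonzero multiple of $e_2$ — precisely as in the $F_2$-cases, where the analogous step killed the construction. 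That coefficient being nonzero (it will be a nonzero rational depending on $r$, $n$, and an $\alpha_k$, coming from the eigenvalue mismatch between $e_1$ and $e_2$ under $R_x$, since their weights are $1+r$ and $2+r$) is the contradiction.

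\textbf{Where the work concentrates.} The genuine obstacle is bookkeeping: unlike the $F_1,F_2,F_3$ families, here the bracket $[e_i,e_j]$ is nonzero for many pairs $i<j$, so the Leibniz identities involving $x$ twist together the $\beta$'s, $\gamma$'s, $\delta$'s with the Catalan-type structure constants $\alpha_k$, and one has to be careful that the ``contradiction coefficient'' does not accidentally cancel for special parameter values. I would handle this by first reducing the annihilator-corrected free parameters as far as possible with basis changes $x'=x-\sum\mu_{i+1}e_i$ (absorbing the tail of $[x,e_0]$ so that $[x,e_0]$ has support only in $e_0,e_1$ and possibly one more term forced by weights), and only then extract the decisive identity; the weight (eigenvalue) argument — that $e_1$ and $e_2$ sit in distinct $R_x$-eigenspaces while $[x,e_1]$ must have an $e_1$-component of size $-(1+r)\neq -(2+r)$ — guarantees the coefficient cannot vanish for any admissible $(\alpha_1,\dots,\alpha_t)$, which is what makes the non-Lie case impossible while the Lie case of Theorem~\ref{A} survives. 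Finally I would remark that this simultaneously shows any solvable Leibniz algebra with nilradical $A_{n+1}^r$ is the Lie algebra of Theorem~\ref{A}, consistent with the abstract of the paper.
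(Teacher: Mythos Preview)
Your proposal has a genuine gap that breaks the argument. You assert that
\[
\langle e_2,\dots,e_n\rangle \subseteq N^2 \subseteq Ann_r(\ll),
\]
and then use $e_2\in Ann_r(\ll)$ to force $[x,e_2]=0$ and extract a contradiction. But this inclusion is \emph{false} here: the nilradical $A_{n+1}^r$ is a Lie algebra with $[e_0,e_i]=e_{i+1}$ for $1\le i\le n-1$, so already $[e_0,e_2]=e_3\neq 0$ shows $e_2\notin Ann_r(\ll)$. The ``square of the filiform kills on the right'' heuristic is valid in the $F_1,F_2$ families precisely because those are non-Lie Leibniz with $[e_0,e_i]=0$ for $i\ge 2$; it does not transfer to a Lie nilradical. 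Consequently there is no contradiction at $[x,e_2]$, and indeed the Lie algebra of Theorem~\ref{A} does exist, so your internal logic is inconsistent (a genuine contradiction would also kill the Lie case).

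The paper's proof proceeds differently. One does \emph{not} get $[x,e_i]=0$; instead one computes $[x,e_i]$ inductively from the Leibniz identity (obtaining, for instance, $[x,e_2]=-(2+r)e_2+\sum_{i\ge 3}\gamma_{i-1}e_i$), and then uses the right-annihilator constraints $[y,[e_1,x]+[x,e_1]]=0$ for $y=e_0,e_{n-1},x$ and $[x,[e_0,x]+[x,e_0]]=0$, $[x,[x,x]]=0$ to pin down $\gamma_i=-b_i$, $\beta_n=0$, $\delta_n=0$. The upshot is that every product satisfies $[u,v]=-[v,u]$ and $[x,x]=0$, so the Leibniz algebra is forced to be Lie --- which is exactly the statement of the proposition, not a non-existence result. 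To repair your outline, drop the annihilator claim for $e_2,\dots,e_n$, compute $[x,e_i]$ recursively, and check that the resulting table is antisymmetric rather than looking for a contradiction.
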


\begin{proof}
From Proposition \ref{pr41} and applying similar arguments to the ones used in the proof of Theorem \ref{A}, we have
 $$\small\begin{array}{ll}
[e_0,e_i]&=e_{i+1},\quad 1\leq i\leq n-1,\\{}
[e_i,e_j]&=\left(\sum\limits_{k=i}^t(-1)^{k-i}\alpha_k
\left (\begin{array}{c}
j-k-1\\
k-i
\end{array} \right )\right) e_{i+j+r},
 \quad 1\leq i<j\leq n-2, \ i+j+r\leq n,\\{}
[e_0,x]&=e_0+a_1e_1,\\{}
[e_1,x]&=(1+r)e_1+\sum\limits_{i=2}^nb_ie_i, \\{}
[e_2,x]&=(2+r)e_2+\sum\limits_{i=3}^nb_{i-1}e_i, \\{}
[e_i,x]&=(i+r)e_i+\sum\limits_{j=i+1}^{i-1+r} b_{j-i+1} e_j+\left(b_{1+r}+a_1\left(\sum\limits_{k=2}^{i-1}\left (\sum\limits_{s=1}^t (-1)^{s-1}\alpha_s
\left (\begin{array}{c}
k-s-1\\
s-1
\end{array} \right )\right)
\right) \right)e_{i+r}+\\
&+\sum\limits_{j=i+1+r}^{n} b_{j-i+1} e_j\ \ \ \mbox{ with }\ \ 3\leq i\leq n-r,\\{}
[e_i,x]&=(i+r)e_i+\sum\limits_{j=i+1}^{n} b_{j-i+1} e_j,\quad n-r+1\leq i\leq n.
\end{array}$$

Let us denote
$$[x,e_0]=\displaystyle\sum\limits_{i=0}^n\beta_ie_i,\qquad [x,e_1]=\sum\limits_{i=0}^n\gamma_ie_i,\qquad [x,x]=\sum\limits_{i=0}^n\delta_ie_i.$$

Applying the Leibniz identity for the elements $$\{e_0,x,x\}, \quad \{e_0,x,e_0\}, \quad \{e_1,x,e_0\}$$ we derive
$$\delta_i=0, \ 0\leq i\leq n-1,\quad \beta_i=0, \ 2\leq i\leq n-1, \quad \beta_1=-a_1, \quad \beta_0=-1.$$

Verifying the Leibniz identity for the products and using the induction method, we compute the following products:
$$\small\begin{array}{ll}
[x,e_2]&=-(1-\gamma_1)e_2+\sum\limits_{i=3}^n \gamma_{i-1}e_i,\\{}
[x,e_i]&=-(i-1-\gamma_1)e_i+\sum\limits_{j=i+1}^{i-1+r} \gamma_{j-i+1} e_j+\left(\gamma_{1+r}+a_1\left(\sum\limits_{k=2}^{i-1}\left(\sum\limits_{s=1}^t (-1)^{s-1}\alpha_s
\left (\begin{array}{c}
k-s-1\\
s-1
\end{array} \right )
\right)\right) \right)e_{i+r}+\\
&+\sum\limits_{j=i+1+r}^{n} \gamma_{j-i+1} e_j, \ \ 3\leq i\leq n-r,\\{}
[x,e_i]&=-(i-1-\gamma_1)e_i+\sum\limits_{j=i+1}^{n} \gamma_{j-i+1} e_j,\quad n-r+1\leq i\leq n.
\end{array}$$

Since $[e_1,x]+[x,e_1]\in Ann_r(\ll)$, we get

$$[e_{n-1},[e_1,x]+[x,e_1]]=[e_{0},[e_1,x]+[x,e_1]]=[x,[e_1,x]+[x,e_1]]=0,$$ from which we obtain
$$\gamma_0=0, \quad \gamma_1=-1-r, \quad \gamma_i=-b_i, \ 2\leq i\leq n-1, \quad \mu_n=-b_n.$$

Similarly, from $[x,[e_0,x]+[x,e_0]]=0$ we conclude $ \beta_n=0.$

Considering the equality $0=[x,[x,x]]$ we get $\delta_n=0.$
Thus, we obtain a Lie algebra.
\end{proof}

\


{\bf Case $B_{n+1}^r(\alpha_1,\dots,\alpha_t)$ with $1\leq r\leq n-4$ and $t=\lfloor\frac{n-r-2}{2}\rfloor$.}

\

The study of this case is similar to previous case.

\begin{prop}\label{pr44}
Any derivation of a filiform Lie algebra of the family $B_{n+1}^r(\alpha_1,\dots,\alpha_t)$ with $1\leq r\leq n-4$ and $t=\lfloor\frac{n-r-2}{2}\rfloor$
has the form:
$$\begin{array}{ll}
d(e_0)=a_0e_0+\sum\limits_{i=2}^na_ie_i,&\\
d(e_1)=(1+r)a_0e_1+\sum\limits_{i=2}^nb_ie_i,&\\
d(e_i)=(i+r)a_0e_i+\sum\limits_{j=i+1}^n(*)e_j, & 2\leq i\leq n-1,\\
d(e_n)=(n+2r)a_0e_n,&
\end{array}$$
where $\{e_0,e_1,\dots,e_n\}$ is a basis of the family $B_{n+1}^r.$
\end{prop}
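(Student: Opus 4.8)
The plan is to mimic the proof of Proposition \ref{pr41}, extracting the general shape of a derivation $d$ by induction and then using the extra bracket relations $[e_i,e_{n-i}]=(-1)^ie_n$ and the structure constants of the $[e_i,e_j]$-products of $B_{n+1}^r$ to pin down the scalars. First I would set $d(e_0)=\sum_{i=0}^n a_ie_i$ and $d(e_1)=\sum_{i=0}^n b_ie_i$ and run the same induction on $i$ that was used in Proposition \ref{pr41}: applying $d$ to $[e_0,e_i]=e_{i+1}$ together with the inductive hypothesis on $d(e_i)$ yields $d(e_{i+1})=((i)a_0+b_1)e_{i+1}+\sum_{j\geq i+2}(*)e_j$, so that $d(e_i)=((i-1)a_0+b_1)e_i+\sum_{j=i+1}^n(*)e_j$ for $2\leq i\leq n$. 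This is the only place the recursive skeleton of the filiform structure is used, and it goes through verbatim because the defining relation $[e_0,e_i]=e_{i+1}$, $1\leq i\leq n-2$, is the same in $B_{n+1}^r$.

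Next I would isolate $b_0$. Using $d([e_1,e_{n-1}])=d(0)$ — valid since $1+(n-1)+r=n+r>n$ puts this product to zero — or more safely $d([e_1,e_{n-2}])$, the only surviving term on the right that can involve $b_0$ is a multiple of $b_0e_n$ via the $[e_0,\cdot]$-action hidden in $d(e_{n-2})$; matching coefficients of $e_n$ forces $b_0=0$. Then, exactly as in Proposition \ref{pr41}, I would feed the fixed first nonzero parameter $\alpha_m$ into $d([e_m,e_{m+1}])=d(\alpha_m e_{2m+1+r})$: the left side produces the coefficient $(2m+2r+b_1+\cdots)$ acting on $e_{2m+1+r}$ while the right side produces $((2m+r)a_0+b_1)\alpha_m$, and comparison gives $b_1=(1+r)a_0$. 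Feeding this back into the inductive formula gives $d(e_i)=(i+r)a_0e_i+\sum_{j=i+1}^n(*)e_j$ for $2\leq i\leq n$, and in particular $d(e_0)$ has no $e_1$-component forced yet; the shape $d(e_0)=a_0e_0+\sum_{i=2}^n a_ie_i$ (i.e.\ $a_1=0$) must be extracted separately.

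The genuinely new step — and the one I expect to be the main obstacle — is showing $a_1=0$ and $d(e_n)=(n+2r)a_0e_n$ rather than the $((n-1)a_0+b_1+\ldots)e_n$ one would naively get; here the relations $[e_i,e_{n-i}]=(-1)^ie_n$ for $1\leq i\leq n-1$ are essential, since they are what distinguishes $B_{n+1}^r$ from $A_{n+1}^r$ and they tie the top weight $e_n$ to the weights of $e_i$ and $e_{n-i}$. Concretely, applying $d$ to $[e_1,e_{n-1}]=-e_n$ (up to sign) and to $[e_2,e_{n-2}]=(-1)^2 e_n$ gives two equations: one of the form $(d(e_1)\text{-weight}+d(e_{n-1})\text{-weight})e_n=(\text{weight of }e_n)e_n$ plus lower-index contamination from the $\sum(*)e_j$ tails, and the cross terms $[e_1,(*)e_j]$, $[(*)e_j,e_{n-1}]$ land back in $e_n$ only through the $[e_i,e_{n-i}]$ relations again. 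Collecting the $e_n$-coefficients from $d([e_1,e_{n-1}])$ yields $(1+r)a_0+b_1+(\text{term involving }a_1)=n+2r$ times $a_0$ or similar; combined with $b_1=(1+r)a_0$ this forces the $a_1$-term to vanish, hence $a_1=0$, and simultaneously fixes $d(e_n)=(n+2r)a_0e_n$. I would also record, as in the earlier propositions, any Jacobi-identity constraints on $(\alpha_1,\dots,\alpha_t)$ coming from these computations, but since the statement asserts only the matrix shape (with the $(*)$ entries unspecified) the bulk of the verification is the bookkeeping of which products vanish for index reasons, which is routine once the weight pattern $e_i\mapsto(i+r)a_0e_i$ is in hand.
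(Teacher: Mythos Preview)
Your overall strategy is exactly what the paper intends: its proof is literally the one line ``In an analogous way to the proof of Proposition~\ref{pr41}'', and you correctly reproduce the induction for the diagonal weights, the extraction of $b_0=0$, and the use of a nonzero $\alpha_m$ to get $b_1=(1+r)a_0$. You also correctly identify that the two genuinely new features in the $B$--case are $a_1=0$ and the shifted weight $d(e_n)=(n+2r)a_0e_n$, and that both must come from the extra relations $[e_i,e_{n-i}]=(-1)^ie_n$ together with the fact that $[e_0,e_{n-1}]=0$ here.

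There is, however, a concrete gap in your extraction of $a_1=0$. The relations you propose to use, $d([e_1,e_{n-1}])$ and $d([e_2,e_{n-2}])$, do \emph{not} involve $a_1$ at all: since $b_0=0$ already, $d(e_1)$ and $d(e_2)$ have no $e_0$-component, and the only bracket with $e_{n-1}$ (resp.\ $e_{n-2}$) that survives is $[e_1,e_{n-1}]=-e_n$ (resp.\ $[e_2,e_{n-2}]=e_n$); so both identities collapse to $d(e_n)=((n-2)a_0+2b_1)e_n=(n+2r)a_0e_n$, which is precisely the weight statement for $e_n$ but says nothing about $a_1$. The ``term involving $a_1$'' you anticipate simply is not there. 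The correct and immediate source of $a_1=0$ is instead the $B$--specific vanishing $[e_0,e_{n-1}]=0$: applying $d$ gives
\[
0=[d(e_0),e_{n-1}]+[e_0,d(e_{n-1})]=a_1[e_1,e_{n-1}]+0=-a_1e_n,
\]
since $[e_0,e_{n-1}]=[e_0,e_n]=0$ and $[e_i,e_{n-1}]=0$ for $i\geq 2$. (Note also your initial remark that $[e_1,e_{n-1}]=0$ is false in $B_{n+1}^r$; it equals $-e_n$, which you do use correctly later.) With this one correction your argument is complete and matches the paper's intended proof.
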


\begin{proof}
In an analogous way to the proof of Proposition \ref{pr41}.
\end{proof}

From Proposition \ref{pr44} and Theorem \ref{thm26}, we conclude that any solvable Leibniz algebra whose nilradical is $B_{n+1}^r(\alpha_1,\dots,\alpha_t)$ has dimension $n+2.$

\begin{thm}\label{B}
 Any solvable Leibniz algebra with nilradical $B_{n+1}^r(\alpha_1,\dots,\alpha_t)$ is isomorphic to an algebra of the following family of Lie algebras:
 $$\begin{array}{ll}
[e_0,e_i]=e_{i+1},& 1\leq i\leq n-2,\\{}
[e_i,e_{n-i}]=(-1)^ie_n, & 1\leq i\leq n-1,\\{}
[e_i,e_j]=\left(\sum\limits_{k=i}^t(-1)^{k-i}\alpha_k
\left (\begin{array}{c}
j-k-1\\
k-i
\end{array} \right )
\right)e_{i+j+r}, & 1\leq i,j\leq n-1, \ i+j+r\leq n-1,\\{}
[e_0,x]=e_0,&\\{}
[e_1,x]=(1+r)e_1+\sum\limits_{i=2}^{n-1}b_ie_i,&\\{}
[e_i,x]=(i+r)e_i+\sum\limits_{j=i+1}^{n-1}b_{j-i+1}e_j, \ 2\leq i\leq n-1,&\\{}
[e_n,x]=(n+2r)e_n.&
\end{array}$$
\end{thm}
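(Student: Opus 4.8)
The strategy mirrors the proof of Theorem \ref{A} and Proposition \ref{A.Leibniz}, with the simplification afforded by Proposition \ref{pr44}: here $d(e_0)$ has no $e_1$-component (the matrix of a derivation already kills $a_1$), so the nilradical $B_{n+1}^r$ behaves slightly more rigidly than $A_{n+1}^r$. First I would take a basis $\{e_0,e_1,\dots,e_n\}$ of the nilradical and adjoin a single complementary vector $x$ spanning $Q$ (this is legitimate because Proposition \ref{pr44} shows there is exactly one nil-independent outer derivation, so $\dim Q=1$ by Theorem \ref{thm26}). The operator $R_x$ must act on the nilradical as a non-nilpotent derivation, hence $a_0\neq 0$, and after scaling $x$ we may assume $a_0=1$. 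Then I would perform the shift $x'=x-\sum_{i=2}^{n-1}a_{i+1}e_i$, which — exactly as in Theorem \ref{A} — absorbs the components $a_2,\dots,a_n$ of $[e_0,x]$ and (by the structure of the derivation matrix) simultaneously removes the corresponding higher-order tails from all the $[e_i,x]$; this yields $[e_0,x]=e_0$ and the stated forms of $[e_i,x]$ up to the undetermined parameters $b_2,\dots,b_{n-1}$, while $[e_n,x]=(n+2r)e_n$ comes for free from the last row of the derivation matrix.

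**Pinning down the left multiplications.** The remaining work is to show that $[x,e_i]$, $[x,x]$ are forced to make $L$ a Lie algebra, i.e. that $[x,e_i]=-[e_i,x]$ and $[x,x]=0$. I would write $[x,e_0]=\sum\beta_ie_i$, $[x,e_1]=\sum\gamma_ie_i$, $[x,x]=\sum\delta_ie_i$, and exploit two facts: (i) the squares $[e_i,e_i]$ and the symmetric sums $[u,v]+[v,u]$ lie in $Ann_r(L)$, and for these filiform Lie nilradicals $\langle e_2,\dots,e_n\rangle$ is in the right annihilator, forcing $[x,e_i]=0$ for $2\le i\le n$ is false here — rather, one runs the Leibniz identity on the triples $\{e_0,x,x\}$, $\{e_0,x,e_0\}$, $\{e_1,x,e_0\}$ to get $\delta_i=0$ for $0\le i\le n-1$, $\beta_0=-1$, and $\beta_i=0$ for $1\le i\le n-1$ (note $\beta_1=0$ here, not $-a_1$, since $a_1=0$ for this family). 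Then an induction on $i$, feeding the known $[e_i,x]$ and the Leibniz identity $[[x,e_{i-1}],e_0]=[[x,e_0],e_{i-1}]+[x,[e_{i-1},e_0]]$, computes all $[x,e_i]$ in terms of $\gamma_1,\dots,\gamma_n$. Finally, using $[e_1,x]+[x,e_1]\in Ann_r(L)$ one tests against $e_{n-2}$ (or $e_0$) and against $x$ to force $\gamma_0=0$, $\gamma_1=-(1+r)$, $\gamma_i=-b_i$ for $2\le i\le n-1$; then $[x,[e_0,x]+[x,e_0]]=0$ gives $\beta_n=0$ and $[x,[x,x]]=0$ gives $\delta_n=0$. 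This shows $R_x=-L_x$ on $L$ and $[x,x]=0$, so $L$ is a Lie algebra of the asserted form.

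**The main obstacle.** The genuinely delicate point is bookkeeping the induction that produces the $[x,e_i]$: the bracket $[e_i,e_j]$ in $B_{n+1}^r$ carries the Catalan-type coefficients $\sum_k(-1)^{k-i}\alpha_k\binom{j-k-1}{k-i}$ and the extra $(-1)^ie_n$ term, so when one applies the Leibniz identity across triples involving two $e$'s and an $x$, these coefficients reappear and must be shown to cancel consistently — in particular the mixed term $a_1\sum_{k}\big(\sum_s(-1)^{s-1}\alpha_s\binom{k-s-1}{s-1}\big)$ visible in Theorem \ref{A} degenerates here (since $a_1=0$), which is what makes the $B$-case cleaner. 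I expect no conceptual difficulty beyond this, since the parametrization of the answer shows the $b_i$ remain genuinely free; one should remark that different choices of $(b_2,\dots,b_{n-1})$ give isomorphic algebras (absorbable by a further change $e_i'=e_i+\sum_{j>i}A_{j-i+1}e_j$ of the kind used in Theorems \ref{thm35}–\ref{thm36}), or else state the theorem up to this normalization exactly as written.
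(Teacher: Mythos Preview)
Your proposal is correct and tracks the paper's own argument, which simply refers back to Theorem~\ref{A} and Proposition~\ref{A.Leibniz}. The only point to watch is that in the $B$-family $[e_0,e_{n-1}]=0$, so your shift of $x$ (whose index should start at $i=1$, not $i=2$) eliminates $a_2,\dots,a_{n-1}$ but not $a_n$; this last coefficient, and the analogous $b_n$ absent from the statement, are removed instead by the substitutions $e_0\mapsto e_0-\tfrac{a_n}{\,n+2r-1\,}e_n$ and $e_1\mapsto e_1-\tfrac{b_n}{\,n+r-1\,}e_n$.
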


\begin{proof}
The proof is similar to Theorem \ref{A}.
\end{proof}

\begin{prop} \label{AA}
 There are not any solvable non-Lie Leibniz algebras with nilradical $B_{n+1}^r(\alpha_1,\dots,\alpha_t).$
\end{prop}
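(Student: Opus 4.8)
The plan is to follow the same strategy used in Proposition \ref{A.Leibniz}: start from the general solvable Leibniz algebra built on the nilradical $B_{n+1}^r$, write down the extra products involving the external generator $x$, and show that the Leibniz identity forces all the ``non-Lie'' structure constants to vanish. First I would record the multiplications. By Proposition \ref{pr44} and by scaling $x$ we may take $a_0=1$; the change $x'=x-\sum_{i=1}^{n-1}a_{i+1}e_i$ kills $a_i$ for $2\le i\le n$ (note $d(e_n)$ has no free parameters beyond $a_0$), so $[e_0,x]=e_0$, $[e_1,x]=(1+r)e_1+\sum_{i=2}^{n-1}b_ie_i$, and the remaining $[e_i,x]$ are the expressions computed in Theorem \ref{B} via the Jacobi identity. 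This is all purely on the ``right'' side; nothing new happens here compared with the Lie case.

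Next I would introduce the genuinely new data $[x,e_0]=\sum_{i=0}^n\beta_ie_i$, $[x,e_1]=\sum_{i=0}^n\gamma_ie_i$, $[x,x]=\sum_{i=0}^n\delta_ie_i$, and grind the Leibniz identity on the triples $\{e_0,x,x\}$, $\{e_0,x,e_0\}$, $\{e_1,x,e_0\}$ to pin down $\delta_i=0$ for $0\le i\le n-1$, $\beta_0=-1$, $\beta_1=0$ (since here $a_1$ does not appear in $[e_0,x]$), and $\beta_i=0$ for $2\le i\le n-1$. Then, exactly as in Proposition \ref{A.Leibniz}, I would run induction to get the $[x,e_i]$ for $2\le i\le n-1$ in terms of $\gamma_1,\dots,\gamma_{n-1}$, and separately handle $[x,e_n]$ using $[e_i,e_{n-i}]=(-1)^ie_n$ together with $d(e_n)=(n+2r)e_n$. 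Using that $[e_1,x]+[x,e_1]\in Ann_r(\ll)$ I would test this element against $e_0$, $e_{n-2}$ (or a suitable $e_j$ with $[e_j,\cdot]\ne 0$) and $x$ to force $\gamma_0=0$, $\gamma_1=-(1+r)$, $\gamma_i=-b_i$ for $2\le i\le n-1$; then $[x,[e_0,x]+[x,e_0]]=0$ gives $\beta_n=0$ and $[x,[x,x]]=0$ gives $\delta_n=0$. At that point all the two-sided products satisfy the skew-symmetry relations, so $\ll$ is a Lie algebra, i.e.\ there is no \emph{non-Lie} solvable Leibniz algebra with this nilradical.

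The main obstacle, and the only place where this differs nontrivially from the $A_{n+1}^r$ computation, is the presence of the pairings $[e_i,e_{n-i}]=(-1)^ie_n$ landing in the top component $e_n$: one must check that the inductively determined $[x,e_i]$ are consistent with these extra relations (the Leibniz identity on triples such as $\{x,e_i,e_{n-i}\}$), and that the coefficient of $e_n$ in $[x,e_1]$ and in $[x,x]$ is genuinely forced to zero rather than remaining a free parameter. Concretely I expect the condition $r\le n-4$ (rather than $n-3$) to be exactly what guarantees enough room for these identities to over-determine the system; I would flag that the ``corner'' identities involving $e_n$ are where one must be careful with index ranges. Since the bulk of the argument is a line-by-line repetition of Proposition \ref{A.Leibniz} with the additional $e_n$-pairings tracked, it is reasonable to present the proof as: ``The proof is carried out by applying the arguments used in Proposition \ref{A.Leibniz}, taking into account the extra products $[e_i,e_{n-i}]=(-1)^ie_n$.''
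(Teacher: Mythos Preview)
Your proposal is correct and follows exactly the approach the paper intends: the paper's own proof is simply ``The proof is similar to Proposition \ref{A.Leibniz},'' and your outline is a faithful elaboration of that, correctly noting the absence of the $a_1$-term in $[e_0,x]$ and the need to track the extra pairings $[e_i,e_{n-i}]=(-1)^ie_n$ when checking consistency for the $e_n$-component.
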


\begin{proof}
The proof is similar to Proposition \ref{A.Leibniz}.
\end{proof}

From Propositions \ref{A.Leibniz} and \ref{AA}, we resume that any solvable Leibniz algebra whose nilradical is either $A_{n+1}^r(\alpha_1,\dots,\alpha_t)$ or $B_{n+1}^r(\alpha_1,\dots,\alpha_t)$ is Lie algebra.


\

\noindent\textbf{Conjecture.}
\begin{itemize}
\item[$(i)$] By the following transformation basis of an algebra of the family from Theorem \ref{A}:
$$e_0'=e_0, \quad e_1'=e_1+\sum\limits_{i=2}^{n-1}A_{i}e_i, \quad e_i'=e_i+\sum\limits_{j=i+1}^nA_{j-i+1}e_j, \quad 2\leq i\leq n, \eqno (*)$$
with $$A_2=-b_2, \quad A_i=\frac{1}{1-i}(b_i+\sum\limits_{j=2}^{i-1}A_jb_{i-j+1}), \quad 3\leq i\leq n,$$
we can eliminate the parameters $b_i=0$ with $2\leq i\leq n$

\

\item[$(ii)$] By the transformation of basis (*) an algebra of the family from Theorem \ref{B} with
$$A_2=-b_2, \quad A_3=\frac{b^2_2}{2}, \quad A_{2k}=\frac{1}{1-2k}(b_{2k}+\sum\limits_{j=2}^{k}A_{2j-1}b_{2k-2j+2}), \quad 2\leq k\leq \frac{n-1}{2},$$ $$\quad A_{2k+1}=-\frac{1}{2k}(\sum\limits_{j=1}^{k}A_{2j}b_{2k-2j+2}), \quad 2\leq k\leq \frac{n-3}{2},$$
we can eliminate the parameters $b_i=0$ with $2\leq i\leq n.$
\end{itemize}

The correctness of Conjecture was checked for fixed low dimensions by program Mathematica. However, we could not to generalize the calculation for low dimensions for any finite dimension because the great number of complex calculations needed.

\section*{ Acknowledgements}
 The first author would like to thank of
the Institute of Ma\-the\-ma\-tics (Uzbekistan) for their hospitality.
The second author was supported by the Grant (RGA) No:11-018 RG/Math/AS\_I--UNESCO FR: 3240262715.

\end{document}